\title[{A quick guide to ordinary state-dependent DDEs}]{A quick guide to ordinary state-dependent delay differential equations}
\author{Bernhard Aigner\orcidlink{0009-0009-8252-162X}
  \and Marcus Waurick\orcidlink{0000-0003-4498-3574}}
\thanks{supported by the state of Saxony via a graduate student stipend}
\address{Institut f\"{u}r Angewandte Analysis\\
  TU Bergakademie Freiberg\\
  Pr\"{u}ferstr. 9, 09599 Freiberg\\
  Germany}
\email{bernhard.aigner@doktorand.tu-freiberg.de, marcus.waurick@math.tu-freiberg.de}
\date{\today}
\DeclarePairedDelimiterX{\norm}[1]{\lVert}{\rVert}{#1}
\DeclarePairedDelimiterX{\abs}[1]{\lvert}{\rvert}{#1}
\DeclarePairedDelimiterX{\scprod}[2]{(}{)}{#1\delimsize| #2}
\newcommand{\e}{\mathrm{e}} % euler number
\newcommand{\dd}{\mathrm{d}} % differential d
\newcommand{\dx}[1][x]{\,\dd{}#1}
\newcommand{\argdot}{\cdot}
\newtheorem*{definition*}{Definition}
\newtheorem*{theorem*}{Theorem}
\newtheorem*{proposition*}{Proposition}
\newtheorem*{lemma*}{Lemma}
\newtheorem*{corollary*}{Corollary}
\newtheorem*{remark*}{Remark}
\newtheorem{definition}{Definition}[section]
\newtheorem{theorem}[definition]{Theorem}
\newtheorem{proposition}[definition]{Proposition}
\newtheorem{lemma}[definition]{Lemma}
\newtheorem{remark}[definition]{Remark}
\begin{document}

%%%%%%%%%%%%%%%%%%%%%%%%%%%%%%
% Abstract
\begin{abstract}
  We review $H^{1}$-well-posedness for initial value problems of ordinary differential equations with state-dependent right-hand side. We streamline known approaches to infer existence and uniqueness of solutions for small times given a Lipschitz-continuous prehistory. The paramount feature is a reduction of the differential equation to a fixed point problem that admits a unique solution appealing to the contraction mapping principle. The use of exponentially weighted Sobolev spaces in this endeavor proves to be as powerful as for ordinary differential equations without delay. Our result includes a blow-up criterium for global existence of solutions. The discussion of well-posedness is concluded by new results covering continuous dependence on initial prehistories and on the right-hand sides.
\end{abstract}

\maketitle

%%%%%%%%%%%%%%%%%%%%%%%%%%%%%
% Section
\section{Introduction}
In this article we recapitulate recent results on delay differential equations, in particular from our articles \cite{Waurick2023} and \cite {Aigner2024}. One of the main aims of the present article is to streamline the theory and present a largely self-contained account of the selected results. We include some previously unpublished theorems regarding continuous dependence on data. We start with a brief explication of the problem:\\
We want to investigate so-called ordinary state-dependent {\em delay differential equations (DDEs)}. As for ordinary differential equations (without delay) we will try to solve initial value problems (IVPs) of the following form
\begin{alignat}{3}
  \begin{aligned}
  \label{FDE}
  v'(t)&=G(t,v_{t})&&\text{for }\;\;\;\;\,0<t\leq T\\
  v&=\Phi&&\text{for }-h\leq t \leq 0\,\text{,}
  \end{aligned}
\end{alignat}
where we use the usual convention
\begin{equation*}
  \begin{split}
  v_{t}\colon\,[t-h,t]&\to H\\
    s&\mapsto v(t+s)\,\text{.}
  \end{split}
\end{equation*}
Differential equations as in \Cref{FDE} are called {\em ordinary} since the lhs is simply the time derivative (and no derivative shows up on the rhs either); a DDE because the dependence contained in the rhs $G$ is upon prior instances of the state variable. We investigate the very general case in which the dependence is upon an entire prior history of the state, so-called {\em state-dependent delay}.\\
In this article we aim to solve \Cref{FDE} in exponentially weighted Sobolev spaces. For this purpose we introduce the notation
\begin{equation*}
  \left(L_{2,\rho}(-h,T;H),\norm{\argdot}_{2,\rho}\right)\coloneq \left(L_{2}(-h,T;H),\e^{-2\rho t}\dx[t]\right)
\end{equation*}
and accordingly
\begin{align*}
  H^{1}_{\rho}(-h,T;H)&\coloneq \left\{u\in L_{2,\rho}(-h,T;H)\colon\,u'\in L_{2,\rho}(-h,T;H)\right\}\\
  \norm{u}_{H^{1}_{\rho}(-h,T;H)}&\coloneq \left(\norm{u}_{L_{2,\rho}(-h,T;H)}^{2}+\norm{u'}_{L_{2,\rho}(-h,T;H)}^{2}\right)^{\nicefrac{1}{2}}\,\text{,}
\end{align*}
where the time derivative is defined in the usual distributional sense. For an introduction to Sobolev spaces we refer to \cite[Ch.~5]{Evans2010}. We would like to point out that in this one-dimensial (in time) setting, the following result assures us that $H^{1}_{\rho}$-functions admit a continuous representative:
\begin{theorem}[Sobolev-embedding]
  \label{Sobolev}
  Let $-\infty<a<b<\infty$. Then $H^{1}(a,b;H)\hookrightarrow \mathcal{C}([a,b];H)$. More precisely, every $f\in H^{1}(a,b;H)$ admits an absolutely continuous representative and
  \begin{equation*}
    \norm{f}_{\mathcal{C}([a,b];H)}\leq \left((b-a)^{\nicefrac{1}{2}}+(b-a)^{-\nicefrac{1}{2}}\right)\norm{f}_{H^{1}(a,b;H)}
  \end{equation*}
\end{theorem}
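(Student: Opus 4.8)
The plan is to reduce everything to the fundamental theorem of calculus: first I would produce an absolutely continuous representative via Bochner integration, then read off the pointwise bound by Cauchy--Schwarz together with a mean value argument.

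First I would construct the representative. Given $f\in H^{1}(a,b;H)$ with distributional derivative $g\coloneq f'\in L_{2}(a,b;H)$, note that $g$ is Bochner integrable on the bounded interval (Cauchy--Schwarz gives $L_{2}\subseteq L_{1}$ here), so that $F(t)\coloneq\int_{a}^{t}g(s)\dx[s]$ is absolutely continuous with $F'=g$ almost everywhere. Then $f-F$ has vanishing distributional derivative, whence the fundamental lemma of the calculus of variations forces $f-F$ to coincide almost everywhere with some constant $c\in H$. Thus $\tilde{f}\coloneq F+c$ is an absolutely continuous representative of $f$, and it satisfies
\[
  \tilde{f}(t)=\tilde{f}(s)+\int_{s}^{t}g(\tau)\dx[\tau]\qquad\text{for all }s,t\in[a,b]\text{.}
\]

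Next I would extract the estimate. For arbitrary $s,t\in[a,b]$, the representation and Cauchy--Schwarz yield
\[
  \norm{\tilde{f}(t)}\leq\norm{\tilde{f}(s)}+\int_{a}^{b}\norm{g(\tau)}\dx[\tau]\leq\norm{\tilde{f}(s)}+(b-a)^{\nicefrac{1}{2}}\norm{f'}_{L_{2}(a,b;H)}\text{.}
\]
To control $\norm{\tilde{f}(s)}$ uniformly, I would use that the continuous map $s\mapsto\norm{\tilde{f}(s)}$ attains a value no larger than its average, so there is $s_{0}\in[a,b]$ with
\[
  \norm{\tilde{f}(s_{0})}\leq\frac{1}{b-a}\int_{a}^{b}\norm{\tilde{f}(s)}\dx[s]\leq(b-a)^{-\nicefrac{1}{2}}\norm{f}_{L_{2}(a,b;H)}\text{,}
\]
again by Cauchy--Schwarz. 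Choosing $s=s_{0}$ gives, for every $t\in[a,b]$,
\[
  \norm{\tilde{f}(t)}\leq(b-a)^{-\nicefrac{1}{2}}\norm{f}_{L_{2}(a,b;H)}+(b-a)^{\nicefrac{1}{2}}\norm{f'}_{L_{2}(a,b;H)}\text{.}
\]

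Finally I would assemble the constant. With $\alpha\coloneq(b-a)^{-\nicefrac{1}{2}}$ and $\beta\coloneq(b-a)^{\nicefrac{1}{2}}$, the right-hand side is the inner product of $(\alpha,\beta)$ with $(\norm{f}_{L_{2}(a,b;H)},\norm{f'}_{L_{2}(a,b;H)})$ in $\mathbb{R}^{2}$; Cauchy--Schwarz bounds it by $(\alpha^{2}+\beta^{2})^{\nicefrac{1}{2}}\norm{f}_{H^{1}(a,b;H)}$, and the elementary inequality $(\alpha^{2}+\beta^{2})^{\nicefrac{1}{2}}\leq\alpha+\beta$ yields the claimed constant after taking the supremum over $t$. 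The main obstacle is the first step: in the $H$-valued setting the passage to the absolutely continuous representative rests on Bochner integration theory and on the vanishing-derivative-implies-constant lemma, both needing slightly more care than their scalar counterparts. Once the representation formula is secured, the remaining estimates are entirely routine.
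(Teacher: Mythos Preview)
Your argument is correct. The paper does not actually prove this statement; it simply refers to standard references (Adams for the embedding, Arendt--Urban for the explicit constant), so there is nothing to compare against. Your route---fundamental theorem of calculus to obtain the absolutely continuous representative, a mean-value selection of $s_{0}$ to bound a single point value by $(b-a)^{-\nicefrac{1}{2}}\norm{f}_{L_{2}}$, then Cauchy--Schwarz twice---is exactly the standard derivation of this estimate and supplies the details the paper omits. The one caveat you already flag is the right one: the ``distributional derivative zero implies constant'' step for $H$-valued functions needs the Bochner framework, but this is routine.
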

This result can be found in any standard reference, e.g. \cite[Thm.~4.12]{Adams2003}. For a proof of the estimate we refer to \cite[Thm.~4.9]{Arendt2015}. In particular, the theorem implies that pointwise evaluation of $H_{\rho}^{1}$-functions is allowed. This will enable us to make use of the fundamental theorem of calculus later on.\\
Lastly we clarify what we mean by a solution to \Cref{FDE}. We refer to the following notion:
\begin{definition}
  \label{DefSol}
  A function $v\in H^{1}(-h,T;H)$ is called a {\em local solution} to \Cref{FDE} if there exists $0<T_{0}\leq T$ such that for all $0<T_{1}<T_{0}$ the restriction $v|_{[-h,T_{1}]}\in H^{1}(-h,T_{1};H)$ and
  \begin{alignat*}{4}
    v'(t)&=G(t,v_{t})&&\quad\text{for }&&{}\;\;\;\;\,0<t\leq T_{1}\\
    v&=\Phi&&\quad\text{for }&&-h\leq t \leq 0\,\text{.}
  \end{alignat*}
  If $T_{0}=T$ we call $u$ a {\em global solution}.
\end{definition}
We will state and prove our well-posedness results in section three, prior to that a preliminary section is necessary though. In section four, we will discuss several applications, followed by a brief discussion and contextualization of our findings in the concluding section.

%%%%%%%%%%%%%%%%%%%%%%%%%%%%%
% Section
\section{Preliminaries}
Before we can present our main results we need to devote some space to discuss important aspects of our approach. In particular we need to investigate the behaviour of the delay, understand the importance of the time derivative with boundary condition and estimate the evaluation functional.

\subsection{The delay}
In our aim to better understand DDEs, we particularly need to understand the behaviour of the history map, that is for a given $0\leq t\leq T$ the function
\begin{equation*}
  \begin{split}
    \Theta_{t} \colon \, H_{\rho}^{1}(-h,T;H)&\rightarrow H^{1}(-h,0;H)\\
    u&\mapsto u_{t}
  \end{split}
  \,\text{.}
\end{equation*}
However, it proves more useful to include the dependence on $u$ into the map and investigate
\begin{equation*}
  \begin{split}
    \Theta \colon H^{1}_{\rho}(-h,T;H)&\rightarrow L_{2,\rho}\left(0,T;H^{1}(-h,0;H)\right)\\
    u&\mapsto (t\mapsto \Theta_{t}u)
  \end{split}
  \,\text{.}
\end{equation*}
\begin{lemma}
  \label{ThetaLip}
  $\Theta$ is Lipschitz-continuous with $\norm{\Theta}\leq (2\rho)^{\nicefrac{-1}{2}}$.
\end{lemma}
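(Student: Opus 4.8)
The plan is to exploit that $\Theta$ is \emph{linear}: for each fixed $t$ the assignment $u\mapsto u_{t}$ is linear, and the value $t\mapsto u_{t}$ depends linearly on $u$. Consequently Lipschitz continuity is equivalent to boundedness, and the optimal Lipschitz constant coincides with the operator norm $\norm{\Theta}$. So (with $\rho>0$) it suffices to establish the a priori estimate $\norm{\Theta u}_{L_{2,\rho}(0,T;H^{1}(-h,0;H))}\leq(2\rho)^{-\nicefrac{1}{2}}\norm{u}_{H^{1}_{\rho}(-h,T;H)}$ for every $u$.

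First I would unravel the target norm. Writing out the definitions gives
\[
  \norm{\Theta u}_{L_{2,\rho}(0,T;H^{1}(-h,0;H))}^{2}=\int_{0}^{T}\e^{-2\rho t}\,\norm{u_{t}}_{H^{1}(-h,0;H)}^{2}\dx[t]\,\text{,}
\]
and the key observation is that the distributional derivative of the history segment is the segment of the derivative, $(u_{t})'(s)=u'(t+s)$. Together with the substitution $r=t+s$ this turns the inner $H^{1}(-h,0;H)$-norm into
\[
  \norm{u_{t}}_{H^{1}(-h,0;H)}^{2}=\int_{t-h}^{t}\left(\norm{u(r)}_{H}^{2}+\norm{u'(r)}_{H}^{2}\right)\dx[r]\,\text{.}
\]

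The main computation is then a change in the order of integration. Reintroducing the shift variable $s$ and applying Tonelli's theorem (the integrand is nonnegative, which also sidesteps any integrability worries), I would arrive at
\[
  \norm{\Theta u}^{2}=\int_{-h}^{0}\e^{2\rho s}\int_{s}^{T+s}\e^{-2\rho r}\left(\norm{u(r)}_{H}^{2}+\norm{u'(r)}_{H}^{2}\right)\dx[r]\dx[s]\,\text{.}
\]
Since $s\in[-h,0]$ we have $\e^{2\rho s}\leq 1$ and $[s,T+s]\subseteq[-h,T]$, so bounding the inner integral by the full weighted $H^{1}_{\rho}$-norm of $u$ decouples the two integrals and leaves the scalar factor $\int_{-h}^{0}\e^{2\rho s}\dx[s]=(1-\e^{-2\rho h})/(2\rho)\leq(2\rho)^{-1}$. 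Taking square roots yields the claimed bound, and the presence of the factor $1-\e^{-2\rho h}$ indicates that the estimate is essentially sharp for large $h$.

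The steps requiring genuine care rather than bookkeeping are two. One is justifying $(u_{t})'=(u')_{t}$ together with the claim that $t\mapsto u_{t}$ is strongly measurable (in fact continuous) as an $H^{1}(-h,0;H)$-valued map, so that $\Theta u$ genuinely lies in the target space; this follows from continuity of translation in $L_{2}$ applied simultaneously to $u$ and to $u'$. The other is the Fubini--Tonelli interchange over the slanted region $\{(t,r):0\leq t\leq T,\ t-h\leq r\leq t\}$; nonnegativity of the integrand makes Tonelli applicable unconditionally, so the only thing to get right is the description of this region after swapping, namely that for fixed $s\in[-h,0]$ the variable $r$ ranges over $[s,T+s]$.
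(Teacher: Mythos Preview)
Your proof is correct and follows essentially the same approach as the paper's: both exploit linearity of $\Theta$, the identity $(u_{t})'=(u')_{t}$, and a Fubini--Tonelli interchange to extract the factor $(1-\e^{-2\rho h})/(2\rho)\leq(2\rho)^{-1}$. The only cosmetic differences are that the paper first proves the $L_{2}$-to-$L_{2}$ bound and then lifts to $H^{1}$ in a separate step, and that it parametrizes the Fubini swap with the outer variable running over $[-h,T]$ rather than keeping the shift $s\in[-h,0]$ outside as you do; your version is slightly more economical but mathematically identical.
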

\begin{proof}
  We first study $\Theta$ as a map from $L_{2,\rho}(-h,T;H)$ to $L_{2,\rho}(0,T;L_{2}(-h,0;H))$. For $u\in L_{2,\rho}\left(-h,T;H\right)$ we can estimate:
  \begin{align*}
    \norm{\Theta u}_{L_{2,\rho}(0,T;L_{2}(-h,0;H))}^{2}  &= \int_0^T \norm{\Theta_{t}u}_{L_{2}(-h,0;H)}^{2} \e^{-2\rho t} \dx[t] \\
                                                         &= \int_0^T \int_{-h}^0 \norm{u(s+t)}_{H}^{2} \dx[s]\, \e^{-2\rho t} \dx[t]\\
                                                         &= \int_0^T \int_{t-h}^t \norm{u(s)}_{H}^{2} \dx[s]\, \e^{-2\rho t} \dx[t]\\
                                                         &= \int_{-h}^T \int_s^{s+h} \norm{u(s)}_{H}^{2}\e^{-2\rho t} \dx[t] \dx[s]\\
                                                         &= \int_{-h}^T \norm{u(s)}_{H}^{2} \tfrac{1}{2\rho}(\e^{-2\rho s}-\e^{-2\rho (s+h)}) \dx[s]\\
                                                         &\leq \frac{1}{2\rho}\int_{-h}^T \norm{u(s)}_{H}^{2} \e^{-2\rho s} \dx[s]\\
                                                         &= \tfrac{1}{2\rho}\norm{u}_{L_{2,\rho}(-h,T;H)}^{2}
  \end{align*}
  Now we pass from the $L_{2}$-case to the $H^{1}$-case using the fact, that for $u \in H^{1}_{\rho}(-h,T;H)$ the weak derivative and $\Theta_{t}$ commute. We can estimate:
  \begin{align*}
    \norm{\Theta u}_{H^{1}_{\rho}(0,T;L_{2}(-h,0;H))}^{2}  &= \int_0^T \norm{\Theta_{t}u}_{L_{2}(-h,0;H)}^{2} \e^{-2\rho t}
                                                             + \norm{\Theta_{t}u'}_{L_{2}(-h,0;H)}^{2} \e^{-2\rho t} \dx[t]\\
                                                      &= \norm{\Theta u}_{L_{2,\rho}(0,T;L_{2}(-h,0;H))}^{2} +
                                                             \norm{\Theta u'}_{L_{2,\rho}(0,T;L_{2}(-h,0;H))}^{2}\\
                                                      &\leq \tfrac{1}{2\rho}\norm{u}_{L_{2,\rho}(-h,T;H)}^{2} +
                                                        \tfrac{1}{2\rho}\norm{u'}_{L_{2,\rho}(-h,T;H)}^{2} \\
                                                      &= \tfrac{1}{2\rho}\norm{u}_{H^{1}_{\rho}(-h,T;H)}^{2}\qedhere
  \end{align*}
\end{proof}
We point out the dependence on $\rho$ in the estimate. The use of exponentially weighted Sobolev spaces will allow us to increase the weights $\rho$ if necessary, resulting in a Lipschitz-constant of $\Theta$ as small as we desire. This is purely a technical advantage though, since on finite intervals, the  spaces $L_{2,\rho}$ and $H^{1}_{\rho}$ are isomorphic to $L_{2}$ and $H^{1}$ respectively.
\begin{remark}[Continuity with respect to time]
  \label{ThetaTimeCont}
  We can also investigate the dependence of the delay for some given fixed $u\in H^{1}_{\rho}\left(-h,T;H\right)$ with respect to time, i.e. the map $[0,T]\to H^{1}(-h,0;H),\,t\mapsto \Theta_{t}u$. We see from
  \begin{equation*}
    \norm{u_{t}-u_{s}}_{H^{1}(-h,0;H)}^{2} = \int_{-h}^0 \norm{u(t+r) - u(s+r)}_{H}^{2} + \norm{u'(t+r) - u'(s+r)}_{H}^{2} \dx[r]
  \end{equation*}
  that $u_{s}\to u_{t}$ in $H^{1}(-h,0;H)$ for $s\to t$. Hence the map $t\mapsto \Theta_{t}u$ is continuous.
\end{remark}

\subsection{The time derivative and its inverse}
Another advantage of utilizing the spaces $H^{1}_{\rho}$ is the fact, that we can define a realisation of the time derivative that proves to be boundedly invertible -- at least on a suitable subspace. To that end we define for $-\infty < a < b <+\infty$ the operator
\begin{equation*}
  \begin{split}
    I_{\rho}\colon\,L_{2,\rho}(a,b;H)&\to L_{2,\rho}(a,b;H)\\
    u&\mapsto \left(t\mapsto\int_{a}^{t} u(s)\dx[s]\right)\,\text{.}
  \end{split}
\end{equation*}
Then we can show:
\begin{proposition}
  $I_{\rho}$ is a bounded linear operator satisfying $\norm{I_{\rho}}\leq \frac{1}{\rho}$.
\end{proposition}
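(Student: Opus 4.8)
The linearity of $I_{\rho}$ is immediate from linearity of the integral, so the whole task is the operator-norm bound $\norm{I_{\rho}}\leq\tfrac{1}{\rho}$. My plan is to reduce the vector-valued estimate to a scalar convolution estimate and then exploit the exponential weight to turn the (non-integrable, length-sensitive) Volterra kernel $\mathbf{1}_{s\leq t}$ into the integrable kernel $\e^{-\rho(t-s)}$.

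First I would pass to scalars. For $u\in L_{2,\rho}(a,b;H)$ the triangle inequality in $H$ gives $\norm{I_{\rho}u(t)}_{H}\leq\int_{a}^{t}\norm{u(s)}_{H}\dx[s]$, so it suffices to bound the right-hand side in $L_{2,\rho}$. The decisive algebraic move is to distribute the weight: since $\e^{-\rho t}=\e^{-\rho(t-s)}\e^{-\rho s}$, one has $\e^{-\rho t}\int_{a}^{t}\norm{u(s)}_{H}\dx[s]=\int_{a}^{t}\e^{-\rho(t-s)}g(s)\dx[s]$, where $g(s):=\e^{-\rho s}\norm{u(s)}_{H}$ is a nonnegative scalar function with $\norm{g}_{L_{2}(a,b)}^{2}=\norm{u}_{L_{2,\rho}(a,b;H)}^{2}$. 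Thus $\norm{I_{\rho}u}_{L_{2,\rho}(a,b;H)}$ is dominated by the $L_{2}(a,b)$-norm of the convolution of $g$ against the decaying kernel $\e^{-\rho(t-s)}$.

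The heart of the proof is then a Schur-type argument carried out by Cauchy--Schwarz. I would split the kernel symmetrically, $\e^{-\rho(t-s)}=\e^{-\rho(t-s)/2}\,\e^{-\rho(t-s)/2}$, and estimate, using $\int_{a}^{t}\e^{-\rho(t-s)}\dx[s]\leq\tfrac{1}{\rho}$,
\begin{equation*}
  \Bigl(\int_{a}^{t}\e^{-\rho(t-s)}g(s)\dx[s]\Bigr)^{2}\leq\tfrac{1}{\rho}\int_{a}^{t}\e^{-\rho(t-s)}g(s)^{2}\dx[s].
\end{equation*}
Integrating in $t$, interchanging the order of integration by Tonelli (the integrand is nonnegative), and using the companion bound $\int_{s}^{b}\e^{-\rho(t-s)}\dx[t]\leq\tfrac{1}{\rho}$ yields $\norm{I_{\rho}u}_{L_{2,\rho}(a,b;H)}^{2}\leq\tfrac{1}{\rho^{2}}\norm{u}_{L_{2,\rho}(a,b;H)}^{2}$, which is the claim. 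Equivalently, after extending $g$ by zero one may invoke Young's convolution inequality, the $L_{1}$-mass of the kernel $\e^{-\rho\argdot}$ on $[0,\infty)$ being exactly $\tfrac{1}{\rho}$.

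The only subtle point — and the reason the weighted spaces are essential here — is that applying Cauchy--Schwarz \emph{before} factoring the weight, e.g. splitting $\norm{u(s)}_{H}=\norm{u(s)}_{H}\e^{-\rho s}\cdot\e^{\rho s}$ and bounding $\int_{a}^{t}\e^{2\rho s}\dx[s]$, produces the constant $\bigl((b-a)/(2\rho)\bigr)^{1/2}$, which degenerates as the interval grows. The entire gain comes from the cancellation $\e^{-\rho t}\e^{\rho s}=\e^{-\rho(t-s)}$, which trades the interval length for the integrable tail of the exponential and leaves a bound depending on $\rho$ alone.
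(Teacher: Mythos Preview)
Your proof is correct and follows essentially the same route as the paper: reduce to scalars, split the exponential weight so as to apply Cauchy--Schwarz with the kernel $\e^{-\rho(t-s)}$ (the paper writes this inline as $\norm{u(s)}_{H}\,\e^{-\rho s/2}\cdot\e^{\rho(s/2-t)}$, which is algebraically the same splitting), then swap the order of integration and use $\int\e^{-\rho(t-s)}\leq\tfrac{1}{\rho}$ twice. Your convolution/Schur-test framing and the alternative via Young's inequality are a nice conceptual repackaging, but the underlying computation is identical.
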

\begin{proof}
  For $u\in L_{2,\rho}(a,b;H)$ we can calculate:
  \begin{align*}
    \norm{I_{\rho}u}_{L_{2,\rho}(a,b;H)}^{2}
    &= \int_{a}^{b}\norm[\bigg]{\int_{a}^t u(s) \dx[s]}_{H}^{2} \e^{-2\rho t} \dx[t] \\
    &\leq \int_{a}^{b} \left(\int_{a}^t \norm{u(s)}_{H} \dx[s]\right)^{2} \e^{-2\rho t} \dx[t] \\
    &\leq \int_{a}^{b} \left(\int_{a}^t \norm{u(s)}_{H} \e^{-\nicefrac{\rho s}{2}} \e^{\rho (\nicefrac{s}{2}-t)}\dx[s]\right)^{2} \dx[t] \\
    &\leq \int_{a}^{b} \left(\int_{a}^t \norm{u(s)}_{H}^{2} \e^{-\rho s} \dx[s]\right) \left(\int_{a}^t \e^{\rho (s-2t)} \dx[s] \right) \dx[t] \\
    &\leq \frac{1}{\rho} \int_{a}^{b} \int_{a}^t \norm{u(s)}_{H}^{2} \e^{-\rho s} \dx[s]\,\e^{-\rho t}\dx[t] \\
    &= \frac{1}{\rho} \int_{a}^{b} \int_{s}^{b} \norm{u(s)}_{H}^{2} \e^{-\rho s} \e^{-\rho t} \dx[t]\dx[s] \\
    &\leq \frac{1}{\rho^{2}} \int_{a}^{b} \norm{u(s)}_{H}^{2} \e^{-2 \rho s} \dx[s] \\
    &= \tfrac{1}{\rho^{2}} \norm{u}_{L_{2,\rho}(a,b;H)}^{2} \qedhere
  \end{align*}
\end{proof}
\begin{remark}[range of $I_{\rho}$ and time derivative]
  The images of $I_{\rho}$ are weakly differentiable,  since the antiderivative (of an $L_{2}$-function on a bounded interval) is differentiable with the function as derivative, cf. \cite[Thm.~6.3.6]{Cohn1980}. As such, the images of $I_{\rho}$ are $H^{1}_{\rho}$-functions, that can be evaluated pointwise by \Cref{Sobolev}. Furthermore, the value at zero is zero. Therefore $I_{\rho}$ maps into
  \begin{equation*}
    H^{1}_{0,\rho}\left(-h,T;H\right)\coloneq \{u\in H^{1}_{\rho}\left(-h,T;H\right)\colon\,u|_{[-h,0]}=0\}
  \end{equation*}
  with operator norm $\norm{I_{\rho}}_{L_{2,\rho}\to H^{1}}\leq (1+\nicefrac{1}{\rho^{2}})^{\nicefrac{1}{2}}$, since for $u\in L_{2,\rho}(a,b;H)$:
  \begin{equation*}
    \norm{I_{\rho}u}_{H^{1}}^{2}=\norm{I_{\rho}u}_{L_{2,\rho}}^{2} + \norm{(I_{\rho}u)'}_{L_{2,\rho}}^{2}
    \leq \left(\norm{I_{\rho}}^{2}_{L_{2,\rho}\to L_{2,\rho}} + 1\right)\norm{u}_{L_{2,\rho}}^{2}
  \end{equation*}
  On the subspace with zero initial condition, $H^{1}_{0,\rho}\left(a,b;H\right)$, the time derivative now is just $\partial_{t}=\mathring{\partial_{t}}\coloneq I_{\rho}^{-1}$.
\end{remark}
In fact one can show surjectivity as well:
\begin{theorem}
  \label{domainDerivative}
  $H^{1}_{\rho}\left(a,b;H\right)\supseteq \operatorname{dom}\mathring{\partial_{t}}= H^{1}_{0,\rho}\left(a,b;H\right)$.
\end{theorem}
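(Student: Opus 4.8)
The plan is to exploit the identity $\mathring{\partial_t}=I_\rho^{-1}$, so that $\operatorname{dom}\mathring{\partial_t}$ is by definition the range of $I_\rho$, and the asserted equality splits into the two inclusions $\operatorname{ran}I_\rho\subseteq H^1_{0,\rho}(a,b;H)$ and $H^1_{0,\rho}(a,b;H)\subseteq\operatorname{ran}I_\rho$. The first inclusion is already contained in the preceding remark: the antiderivative of an $L_{2,\rho}$-function lies in $H^1_\rho$ and vanishes at the left endpoint. Moreover $I_\rho$ is injective, since $I_\rho u=0$ means $\int_a^t u(s)\dx[s]=0$ for all $t$, whence $u=0$ upon differentiation; this is what makes $I_\rho^{-1}$ well defined as a single-valued operator with $\operatorname{dom}I_\rho^{-1}=\operatorname{ran}I_\rho$. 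Thus the only genuinely new content, as announced before the statement, is the surjectivity of $I_\rho$ onto $H^1_{0,\rho}(a,b;H)$.

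For this reverse inclusion I would take an arbitrary $v\in H^1_{0,\rho}(a,b;H)$, set $u\coloneq v'\in L_{2,\rho}(a,b;H)$ (admissible by the very definition of $H^1_\rho$), and prove $I_\rho u=v$. The natural tool is the fundamental theorem of calculus: by \Cref{Sobolev} the function $v$ admits an absolutely continuous representative and therefore satisfies $v(t)-v(a)=\int_a^t v'(s)\dx[s]$ for every $t\in[a,b]$. Since the defining zero initial condition forces $v(a)=0$, this reads $v(t)=\int_a^t u(s)\dx[s]=(I_\rho u)(t)$, i.e.\ $v=I_\rho u\in\operatorname{ran}I_\rho$, which closes the argument.

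The step requiring most care is the vector-valued fundamental theorem of calculus underlying the second paragraph, namely that an $H^1_\rho$-function agrees with the integral of its weak derivative up to its boundary value. This is precisely what the absolutely continuous representative from \Cref{Sobolev} provides, and the exponential weight causes no difficulty on the bounded interval $(a,b)$, where $L_{2,\rho}$ and $H^1_\rho$ are isomorphic to the unweighted spaces $L_2$ and $H^1$, so the classical identity transfers verbatim. Beyond this one should briefly record that the weak derivative of $I_\rho u$ is $u$, so that the two constructions are genuinely mutually inverse; this follows from the differentiation-of-the-antiderivative statement already cited in the remark.
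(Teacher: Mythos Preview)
Your proposal is correct and follows essentially the same route as the paper: both reduce the equality to surjectivity of $I_\rho$ onto $H^1_{0,\rho}(a,b;H)$, then for a given $v\in H^1_{0,\rho}(a,b;H)$ invoke the absolutely continuous representative from \Cref{Sobolev} to apply the fundamental theorem of calculus, obtaining $v(t)=v(a)+\int_a^t v'(s)\dx[s]=I_\rho v'(t)$ since $v(a)=0$. Your additional remarks on injectivity of $I_\rho$ and on the vector-valued FTC are accurate elaborations, but the core argument is identical to the paper's.
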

\begin{proof}
  That $\operatorname{dom}\mathring{\partial_{t}}\subseteq H^{1}_{0,\rho}\left(a,b;H\right)$ follows immediately from the definition. We only need to show that $I_{\rho}$ is onto to prove the reverse inclusion. For a given $u\in H^{1}_{0,\rho}\left(a,b;H\right)$ we can simply appeal to the fundamental theorem of calculus, cf. \cite[Cor.~6.3.7]{Cohn1980}, which is applicable, because \Cref{Sobolev} guarantees absolute continuity of $u$. We obtain:
  \begin{equation*}
    u(t) - u(a) = \int_a^t u'(s)\dx[s]
  \end{equation*}
  Since $u(a)=0$, this simply reads: $u=I_{\rho}u'$ and $u'\in L_{2,\rho}\left(a,b;H\right)$ because $u\in H^{1}_{\rho}\left(a,b;H\right)$.
\end{proof}
This result will allow us to easily transform the problem (\ref{FDE}) into a fixed point problem by simply applying the inverse time derivative to the differential equation.

\subsection{The evaluation operator}
In our examples we will encounter the evaluation operator
\begin{equation*}
  \begin{split}
    \operatorname{ev}\colon\,H^{1}_{\rho}(0,T;H)\times [0,T]&\rightarrow H\\
    (u,t)&\mapsto u(t)\,\text{.}
  \end{split}
\end{equation*}
We can observe the following immediate result:
\begin{lemma}
  \label{evHolder}
  $\operatorname{ev}$ is H\"{o}lder-continuous with exponent $\nicefrac{1}{2}$ with respect to the second argument, i.e. for given $u\in H^{1}\left(0,T;H\right)$ and $s, t \in [0,T]$ we can estimate:
  \begin{equation*}
    \norm{u(s)-u(t)}_{H} \leq \abs{s-t}^{\nicefrac{1}{2}}\norm{u}_{H^{1}(0,T;H)}
  \end{equation*}
\end{lemma}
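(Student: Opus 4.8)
The plan is to reduce the claim to the fundamental theorem of calculus together with the Cauchy--Schwarz inequality. By \Cref{Sobolev} the function $u$ admits an absolutely continuous representative, so pointwise evaluation is meaningful and, what is more, the fundamental theorem of calculus applies. Assuming without loss of generality that $t\leq s$, I would first express the difference of the two evaluations as an integral of the derivative,
\begin{equation*}
  u(s)-u(t)=\int_{t}^{s}u'(r)\dx[r]\,\text{,}
\end{equation*}
which is legitimate precisely because of the absolute continuity just invoked (the same mechanism exploited in the proof of \Cref{domainDerivative}).

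Next I would pass to norms and move the norm inside the integral by the triangle inequality in its integral form,
\begin{equation*}
  \norm{u(s)-u(t)}_{H}\leq\int_{t}^{s}\norm{u'(r)}_{H}\dx[r]\,\text{.}
\end{equation*}
The key step is then to apply the Cauchy--Schwarz inequality on the interval $[t,s]$ of length $s-t=\abs{s-t}$, pairing $\norm{u'(\argdot)}_{H}$ with the constant function $1$. Integrating the constant yields exactly the factor $\abs{s-t}^{\nicefrac{1}{2}}$, while the remaining factor is the $L_{2}(t,s;H)$-norm of $u'$, which is dominated by the $L_{2}(0,T;H)$-norm of $u'$ after enlarging the domain of integration to the whole interval.

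Finally, since $\norm{u'}_{L_{2}(0,T;H)}\leq\norm{u}_{H^{1}(0,T;H)}$ directly from the definition of the $H^{1}$-norm, the chain of estimates closes and produces the asserted bound. I do not anticipate any genuine obstacle here: the only point requiring care is the justification of the fundamental theorem of calculus for vector-valued functions, and this is supplied by the Sobolev embedding \Cref{Sobolev}; everything else is a routine Cauchy--Schwarz argument entirely analogous in spirit to the estimate already carried out for $I_{\rho}$.
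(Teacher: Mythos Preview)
Your proposal is correct and follows essentially the same route as the paper's proof: fundamental theorem of calculus via the Sobolev embedding, triangle inequality, Cauchy--Schwarz on the subinterval, enlargement of the domain of integration, and the trivial bound $\norm{u'}_{L_{2}}\leq\norm{u}_{H^{1}}$. The only cosmetic difference is that the paper works with the squared inequality throughout.
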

\begin{proof}
  For $u\in H^{1}\left(0,T;H\right)$ and $0\leq s\leq t \leq T$ we can estimate:
  \begin{align*}
    \norm{u(s)-u(t)}_{H}^{2}&=\norm[\bigg]{\int_s^t u'(r) \dx[r]}_{H}^{2}\\
                            &\leq \left( \int_s^t \norm{u'(r)}_{H} \dx[r] \right)^{2}\\
                            &\leq \abs{t-s}\int_s^t \norm{u'(r)}_{H}^{2} \dx[r]\\
                            &\leq \abs{t-s}\int_0^T \norm{u'(r)}_{H}^{2} \dx[r]\\
                            &=\abs{t-s}\norm{u'}_{L_{2}(0,T;H)}^{2}\\
                            &\leq \abs{t-s}\norm{u}_{H^{1}(0,T;H)}^{2} \qedhere
  \end{align*}
\end{proof}
For $u\in H^{1}_{\rho}\left(0,T;H\right)$, due to equivalence of norms, a constant appears on the rhs. For later purposes, ideally we want Lipschitz-continuity though. To this end -- and for a subsequent projection argument -- we introduce the following subspace of $H^{1}(-h,0;H)$ for some given $\alpha>0$:
\begin{equation*}
  V_{\alpha}\coloneq \{\varphi\in H^{1}(-h,0;H)\colon\,\norm{u'}_{\infty}\leq\alpha\}\,\text{,}
\end{equation*}
where for $f\in L_{2}(a,b;H)$ we set $\norm{f}_{\infty}\coloneq \operatorname{ess}\sup_{t\in (-h,0)}\norm{f(t)}_{H}$.
\begin{remark}[$V_{\alpha}$ is a closed convex subspace of $H^{1}(-h,0;H)$]
  \label{Valphaclosed}
  Convexity is obvious. For closedness consider $\{u_{n}\}_{n\in \mathbb{N}}\subseteq V_{\alpha}$ converging to some $u\in H^{1}(-h,0;H)$. In particular $\{u'_{n}\}_{n}$ converges to $u'$ in $L_{2}(-h,0;H)$. Appealing to the Fischer-Riesz theorem, a subsequence $\{u'_{n_{k}}\}_{k}$ converges pointwise almost everywhere to the limit $u'$. Therefore $\operatorname{ess}\operatorname{sup}\abs{u'}\leq \alpha$.
\end{remark}
\begin{remark}[$\operatorname{ev}$ is Lipschitz in time on $V_{\alpha}$]
  \label{evLip}
  We can trivially estimate the evaluation mapping as a map
  \begin{align*}
    \operatorname{ev}\colon\, V_{\alpha}\times [0,T]&\rightarrow H\\
    (u,t)&\mapsto u(t)
  \end{align*}
  by computing for $s\leq t$:
  \begin{equation*}
    \norm{u(s)-u(t)}_{H}=\norm[\bigg]{\int_s^t u'(r) \dx[r]}_{H}
                            \leq \int_s^t \norm{u'(r)}_{H} \dx[r]
                            \leq \abs{t-s}\alpha\,\text{,}
  \end{equation*}
  hence verifying Lipschitz-continuity with respect to time.
\end{remark}

%%%%%%%%%%%%%%%%%%%%%%%%%%%%%
% Section
\section{Main results}
The main theorem of our explication is a generalized version of the Picard--Lindel\"of theorem. The conventional Picard--Lindel\"of theorem for ODEs in Hilbert spaces (cf. \cite[Thm.~4.2.3]{Waurick2022}) is not applicable in our scenario, because the evaluation mapping is not Lipschitz-continuous in general. Restricting to the subspaces $V_{\alpha}$, by \Cref{evLip} we can recover Lipschitz-continuity. We adjust our definition of Lipschitz-continuity for the rhs as follows:
\begin{definition}
  Let $U\subseteq H^{1}(-h,0;H)$ be open. We call $G\colon\,[0,T]\times U\to H$ {\em almost uniformly Lipschitz-continuous} if for all $\alpha >0$ there exists some $L_{\alpha}>0$ such that
  \begin{equation*}
    \forall t\in [0,T]\,\forall \varphi,\psi\in V_{\alpha}\colon\,\norm{G(t,\varphi)-G(t,\psi)}_{H}\leq L\norm{\varphi -\psi}_{H^{1}(-h,0;H)}
  \end{equation*}
\end{definition}
The strategy of the proof for our Picard--Lindel\"of theorem is to adapt the classical proof by Morgenstern (cf. \cite{Morgenstern1952}) utilizing exponentially weighted spaces to this generalized notion of Lipschitz-continuity. As a first step, we transform the problem (\ref{FDE}) into a fixed point problem. For that purpose we make the ansatz $u=v+\hat{\Phi}$, where
\begin{equation*}
  \hat{\Phi}\colon\,[-h,T]\to H \qquad t\mapsto
  \begin{cases}\Phi(0) &\text{for} \;\;\;\;\,0<t\leq T\\ \Phi(t) &\text{for} -h\leq t\leq 0\end{cases}
\end{equation*}
and $v\in H^{1}_{0,\rho}\left(-h,T;H\right)$. It is clear that $\hat{\Phi}\in H^{1}(-h,T;H)$, since it is absolutely continuous with weak derivatives in $L_{2}$ on both sides of $0$. We can then transform
\begin{alignat}{3}
  \begin{aligned}
  u'(t)&=G(t,u_{t})&&\text{for }\;\;\;\;\,0<t\leq T\\
  u&=\Phi&&\text{for }-h\leq t \leq 0
  \end{aligned}
  \tag{\ref{FDE}}
\end{alignat}
for $u\in H^{1}_{\rho}(-h,T;H)$ into the {\em fixed point problem (FPP)}
\begin{equation}
  \label{FPP}
  v=I_{\rho}G\bigl(\argdot,(v+\hat{\Phi})_{(\argdot)}\bigr)
\end{equation}
for $v\in H^{1}_{0,\rho}\left(-h,T;H\right)$. We prove equivalence of the problems:
\begin{lemma}
  The IVP (\ref{FDE}) and the FPP (\ref{FPP}) are equivalent.
\end{lemma}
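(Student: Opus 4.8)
The plan is to treat the affine substitution $u=v+\hat{\Phi}$ as a bijection between the admissible solutions $u\in H^{1}_{\rho}(-h,T;H)$ of the IVP and the functions $v\in H^{1}_{0,\rho}(-h,T;H)$, and then to translate the two defining conditions of (\ref{FDE}) — the equation on $(0,T]$ and the prehistory on $[-h,0]$ — separately into the two structural features of the FPP. Throughout, the crucial facts are that $\hat{\Phi}\in H^{1}(-h,T;H)$ with $\hat{\Phi}'=0$ on $(0,T)$ (since $\hat{\Phi}$ is constantly $\Phi(0)$ there) and $\hat{\Phi}=\Phi$ on $[-h,0]$, together with \Cref{domainDerivative}, which tells us that on $H^{1}_{0,\rho}(-h,T;H)=\operatorname{dom}\mathring{\partial_{t}}$ the operators $I_{\rho}$ and $\mathring{\partial_{t}}=\partial_{t}$ are mutually inverse.

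For the forward implication I would start from a solution $u$ of (\ref{FDE}) and set $v\coloneq u-\hat{\Phi}$. On $[-h,0]$ we have $u=\Phi=\hat{\Phi}$, hence $v|_{[-h,0]}=0$; since $u,\hat{\Phi}\in H^{1}_{\rho}$ this already places $v\in H^{1}_{0,\rho}(-h,T;H)$, where \Cref{Sobolev} legitimises the pointwise reading of the prehistory condition. Differentiating on $(0,T)$ and using $\hat{\Phi}'=0$ there gives $v'=u'=G(\argdot,u_{\argdot})=G(\argdot,(v+\hat{\Phi})_{\argdot})$, while $v'=0$ on $(-h,0)$. Applying \Cref{domainDerivative} in the form $v=I_{\rho}\mathring{\partial_{t}}v=I_{\rho}v'$ then yields exactly (\ref{FPP}), once the right-hand side is read with $G(\argdot,(v+\hat{\Phi})_{\argdot})$ extended by zero on $[-h,0]$ — which is consistent, because $v'$ itself vanishes there.

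For the converse I would simply reverse these steps: given a fixed point $v\in H^{1}_{0,\rho}(-h,T;H)$ of (\ref{FPP}), set $u\coloneq v+\hat{\Phi}\in H^{1}_{\rho}(-h,T;H)$. Because $v|_{[-h,0]}=0$ and $\hat{\Phi}=\Phi$ there, $u$ satisfies the prehistory condition $u=\Phi$ on $[-h,0]$. Applying $\mathring{\partial_{t}}=I_{\rho}^{-1}$ to (\ref{FPP}) — legitimate since the range of $I_{\rho}$ lies in $\operatorname{dom}\mathring{\partial_{t}}$ — gives $v'=G(\argdot,(v+\hat{\Phi})_{\argdot})$ on $(0,T)$, whence $u'=v'+\hat{\Phi}'=v'=G(\argdot,u_{\argdot})$ there. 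Thus $u$ solves (\ref{FDE}), and the two problems are equivalent.

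The computation is essentially bookkeeping, so the only genuinely delicate points are the passage between the pointwise prehistory condition and membership in $H^{1}_{0,\rho}$ (handled by \Cref{Sobolev}, which legitimises pointwise evaluation and the identification $v(0)=0$), and the correct interpretation of $I_{\rho}$ on the right-hand side of (\ref{FPP}): one must ensure that $G(\argdot,(v+\hat{\Phi})_{\argdot})$, a priori only defined on $[0,T]$, is understood with the zero extension that makes $I_{\rho}$ land in $H^{1}_{0,\rho}(-h,T;H)$, so that applying and inverting $\mathring{\partial_{t}}$ via \Cref{domainDerivative} is unambiguous. I expect this interval/extension bookkeeping, rather than any analytic difficulty, to be the main thing to get right.
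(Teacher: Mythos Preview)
Your proposal is correct and follows essentially the same route as the paper: the affine substitution $u=v+\hat{\Phi}$ together with \Cref{domainDerivative} to pass between differentiation and $I_{\rho}$ is exactly what the paper does, only stated more tersely there (``integration of the differential equation'' for one direction, ``after differentiation'' for the other). Your explicit remarks on the zero extension of $G(\argdot,(v+\hat{\Phi})_{\argdot})$ to $[-h,0]$ and on $\hat{\Phi}'=0$ on $(0,T)$ make the bookkeeping more transparent than the paper's version, but the argument is the same.
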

\begin{proof}
  We quickly verify both implications:
  \begin{itemize}[leftmargin=3ex]
    \item If $u\in H^{1}_{\rho}\left(-h,T;H\right)$ is a solution of \Cref{FDE}, then we can define $v\coloneq u-\hat{\Phi}$, which is in $H^{1}_{0,\rho}\left(-h,T;H\right)$, since it is weakly differentiable and $v|_{[-h,0]}=0$. Integration of the differential equation then proves the claim.
    \item If $v\in H^{1}_{0,\rho}\left(-h,T;H\right)$ satisfies \Cref{FPP}, let
          \begin{equation*}
            u\colon\,[-h,T]\to H,\qquad t\mapsto \begin{cases}v(t)+\Phi(0)&\text{for} \;\;\;\;\,0<t\leq T\\ \Phi(t) &\text{for} -h\leq t\leq 0\end{cases}\text{.}
          \end{equation*}
          $u$ satisfies the initial condition and because $v(0)=0$, $v\in \operatorname{dom}\mathring{\partial_{t}}$ by \Cref{domainDerivative}, which after differentiation for almost all $t>0$ gives:
          \begin{equation*}
            u'(t)=(v+\Phi(0))'(t)=v'(t)=G(t,(v+\hat{\Phi})_{t})=G(t,u_{t}) \qedhere
          \end{equation*}
  \end{itemize}
\end{proof}
To make use of the subspaces $V_{\alpha}$ we introduce the metric projection on $V_{\alpha}$:
\begin{align*}
  \pi_{\alpha}\colon \, H^{1}(-h,0;H)&\to V_{\alpha}\\
  \varphi&\mapsto \begin{dcases*}\varphi & if $\varphi\in V_{\alpha}$\\
    \operatorname{argmin}\{\norm{\varphi-\psi}_{H^{1}(-h,0;H)}\colon\,\psi\in V_{\alpha}\} & if $\varphi \notin V_{\alpha}$\end{dcases*}
\end{align*}
\begin{remark}[properties of $\pi_{\alpha}$]
  We remind the reader that the best-approximation of a closed and convex subset (which $V_{\alpha}$ is by \Cref{Valphaclosed}) is unique in Hilbert spaces, cf. \cite[Thm.~V.3.2]{Werner2011}. The associated projection is in fact a Lipschitz-continuous map with Lipschitz-constant $1$.
\end{remark}
The projection allows us to pose a fixed point problem on $V_{\alpha}$. We define:
\begin{align*}
            \Gamma_{\alpha}\colon\,H^{1}_{0,\rho}\left(-h,T;H\right)&\to H^{1}_{0,\rho}\left(-h,T;H\right)\\
            v&\mapsto I_{\rho}G\left(\argdot,\pi_{\alpha}\bigl((v+\hat{\Phi})_{(\argdot)}\bigr)\right)
\end{align*}
\vspace{-3ex}
\begin{remark}[$\Gamma_{\alpha}$ is well defined]
  \label{Gammaselfmapping}
  We disperse all concerns in order:
  \begin{itemize}[leftmargin=3ex]
    \item First note that $\hat{\Phi}\in H^{1}_{\rho}\left(-h,T;H\right)$, since the derivative of $\hat{\Phi}$ exists almost everywhere because of absolute continuity and is in $L_{2,\rho}(-h,T;H)$, hence the weak derivative is as well.
    \item We know that $G$ is continuous and we can write the map $t\mapsto G\bigl(t,\pi_{\alpha}\bigl((v+\hat{\Phi})_{t}\bigr)\bigr)$ as $t\mapsto G\bigl(t,\pi_{\alpha}\bigl(\Theta_{t}(v+\hat{\Phi})\bigr)\bigr)$, which is continuous as composition of continuous maps, note \Cref{ThetaTimeCont} and \Cref{Sobolev}. Therefore the integral is well-defined.
    \item $\Gamma_{\alpha}u\in H^{1}_{0,\rho}\left(-h,T;H\right)$, since integrals over continuous functions on finite intervals are differentiable. The zero boundary term is clear by definition.
  \end{itemize}
\end{remark}
The modified FPP is then to find a unique $v\in H^{1}_{0,\rho}\left(-h,T;H\right)$ such that
\begin{equation}
  \label{FPPProj}
  v=\Gamma_{\alpha}v
\end{equation}
We can prove that it is equivalent to the modified IVP
\begin{alignat}{4}
  \label{FDEProj}
  \begin{aligned}
    u'(t)&=G\left(t,\pi_{\alpha}\left((u)_{t}\right)\right) &&\quad\text{for }&&&t>0\,\text{,}\\
    u(t)&=\Phi(t)&&\quad\text{for }&&&-h\leq t\leq 0\,\text{.}
  \end{aligned}
\end{alignat}
\begin{lemma}
  \label{EquivModified}
  Let $\alpha >0$ and $\Phi \in V_{\alpha}$. Let $U\subseteq H^{1}(-h,0;H)$ be an open neighbourhood of $\Phi$. Let $G\colon\,[0,T]\times U\to H$ be continuous. Then the following are equivalent:
  \begin{enumerate}[label=\roman*), leftmargin=4ex]
    \item \Cref{FDEProj} admits a unique solution.
    \item \Cref{FPPProj} admits a unique solution.
  \end{enumerate}
  In either case, the solutions coincide.\footnote{Using a Gr\"onwall argument, one can even show, that it is exponentially bounded.}
\end{lemma}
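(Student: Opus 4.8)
The plan is to mirror the proof of the preceding equivalence lemma (for \Cref{FDEProj} and \Cref{FPPProj} without the projection), now with $\pi_\alpha$ carried along inside the right-hand side, and to organise the two implications as an affine bijection between the two solution sets so that the uniqueness clauses match up automatically. The substitution realising the correspondence is $u = v + \hat\Phi$, equivalently $v = u - \hat\Phi$, under which the trivial but central identity $(v+\hat\Phi)_{t} = u_{t}$ holds for every $t\in[0,T]$, while the initial condition $u|_{[-h,0]}=\Phi$ is equivalent to $v|_{[-h,0]}=0$, i.e. to $v\in H^{1}_{0,\rho}(-h,T;H)$. Well-definedness of the expressions involved—continuity of $t\mapsto G(t,\pi_{\alpha}(\Theta_{t}(v+\hat\Phi)))$ and hence integrability of the integrand defining $\Gamma_{\alpha}$—has already been secured in \Cref{Gammaselfmapping}, so no new regularity issue arises.

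For i) $\Rightarrow$ ii) I would start from a solution $u$ of \Cref{FDEProj} and set $v\coloneq u-\hat\Phi$. Since $u\in H^{1}(-h,T;H)$ and $\hat\Phi\in H^{1}_{\rho}(-h,T;H)$ (first bullet of \Cref{Gammaselfmapping}), the difference $v$ is weakly differentiable with vanishing initial trace, so $v\in H^{1}_{0,\rho}(-h,T;H)=\operatorname{dom}\mathring{\partial_{t}}$ by \Cref{domainDerivative}. As $\hat\Phi$ is constant equal to $\Phi(0)$ on $(0,T]$, we have $v'=u'$ there, so the differential equation reads $v'=G(\argdot,\pi_{\alpha}((v+\hat\Phi)_{\argdot}))$; applying $I_{\rho}$ and invoking the fundamental-theorem identity $v=I_{\rho}v'$ from \Cref{domainDerivative} (legitimate because $v(0)=0$) yields $v=\Gamma_{\alpha}v$.

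For ii) $\Rightarrow$ i) I would run this backwards: given $v=\Gamma_{\alpha}v$ with $v\in H^{1}_{0,\rho}(-h,T;H)$, define $u\coloneq v+\hat\Phi$ piecewise as in the statement, so that $u|_{[-h,0]}=\Phi$ and $u\in H^{1}(-h,T;H)$ as a sum of two such functions. Because $v(0)=0$, \Cref{domainDerivative} places $v$ in $\operatorname{dom}\mathring{\partial_{t}}$, whence applying $\mathring{\partial_{t}}=I_{\rho}^{-1}$ to the fixed-point equation recovers $v'=G(\argdot,\pi_{\alpha}((v+\hat\Phi)_{\argdot}))$ for almost every $t>0$; rewriting via $u'=v'$ and $(v+\hat\Phi)_{t}=u_{t}$ gives exactly \Cref{FDEProj}.

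Finally, the two constructions $v\mapsto v+\hat\Phi$ and $u\mapsto u-\hat\Phi$ are mutually inverse affine maps carrying one solution set bijectively onto the other; hence one set is a singleton precisely when the other is, which is the asserted equivalence, and the clause "the solutions coincide" is just the relation $u=v+\hat\Phi$. I expect the only genuinely delicate point to be bookkeeping rather than analysis: one must ensure that the inversion of the time derivative is licensed, and this is exactly where the zero initial condition $v(0)=0$ together with \Cref{domainDerivative} are indispensable, while the question of whether the projected argument $\pi_{\alpha}((v+\hat\Phi)_{t})$ lies in a region where $G$ is well-behaved has already been absorbed into the continuity statement of \Cref{Gammaselfmapping}.
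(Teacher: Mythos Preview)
Your proposal is correct and follows essentially the same approach as the paper: both argue via the affine substitution $u=v+\hat\Phi$, integrate (equivalently, apply $I_{\rho}$) in one direction and differentiate (equivalently, apply $\mathring{\partial_t}=I_{\rho}^{-1}$) in the other, with the uniqueness transfer handled by the bijection between the two solution sets. The only cosmetic difference is that the paper writes out the integral $u(t)-u(0)=\int_0^t G(s,\pi_\alpha(u_s))\,\dd s$ explicitly rather than phrasing it as $v=I_\rho v'$, but the content is identical.
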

\begin{proof}
  \begin{itemize}[leftmargin=2ex]
    \item Let $u$ be a unique local solution of \Cref{FDEProj}. We integrate the differential \cref{FDEProj} and obtain:
          \begin{equation*}
            u(t)-u(0)= \int_0^t G\bigl(s,\pi_{\alpha}(u_{s})\bigr)\dx[s]
          \end{equation*}
          We substitute $v\coloneq u-\hat{\Phi}$ and immediately see that $v\in H^{1}_{0,\rho}\left(-h,T;H\right)$ and plugging $v$ into the previous equation yields for $t>0$:
          \begin{equation*}
            v(t)=\int_0^t G\bigl(s,\pi_{\alpha}((v+\hat{\Phi})_{s})\bigr)\dx[s]
          \end{equation*}
          Hence we obtain a fixed point of \Cref{FPPProj}. If the fixed point is not unique then because $v$ is weakly differentiable (continuously differentiable for $t>0$, weakly for $t<0$ and continuous), out of two fixed points $v$ and $w$ we obtain two functions $u_{v}\coloneq v-\hat{\Phi}$ and $u_{w}\coloneq w-\hat{\Phi}$ and consequently we are able to identify two different solutions to \Cref{FDEProj}.
    \item The converse follows by applying the same arguments in reverse direction.
  \end{itemize}
  That the solutions agree is apparent from our deduction.
\end{proof}
Utilizing the projection, we obtain at least local existence and uniqueness:
\begin{theorem}[Picard--Lindel\"of for DDEs -- local version]
  \label{PLlocal}
  Let $\alpha >0$ and $\Phi \in V_{\alpha}$. Let $U\subseteq H^{1}(-h,0;H)$ be an open neighbourhood of $\Phi$. Let $G\colon\,[0,T]\times U\to H$ be almost uniformly Lipschitz-continuous. Then the IVP (\ref{FDE}) admits a unique local solution.
\end{theorem}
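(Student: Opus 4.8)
The plan is to solve the projected problem \Cref{FDEProj} on the whole interval $[0,T]$ by a contraction argument---exploiting the freedom in the weight $\rho$ rather than shrinking the time horizon---and then to show that, near $t=0$, its solution already solves \Cref{FDE}. Before starting I would \emph{enlarge} the threshold: since $\Phi\in V_{\alpha}$ while $G(0,\Phi)\in H$ is a fixed vector, I replace $\alpha$ by some $\beta\geq\alpha$ with $\norm{G(0,\Phi)}_{H}<\beta$; then still $\Phi\in V_{\alpha}\subseteq V_{\beta}$, and almost uniform Lipschitz continuity supplies a constant $L_{\beta}$ valid on $V_{\beta}$. By \Cref{EquivModified} solving \Cref{FDEProj} amounts to finding a unique $v\in H^{1}_{0,\rho}(-h,T;H)$ with $v=\Gamma_{\beta}v$, and by \Cref{Gammaselfmapping} the map $\Gamma_{\beta}$ is a well-defined self-map of $H^{1}_{0,\rho}(-h,T;H)$, which is complete, being the kernel of the bounded restriction $u\mapsto u|_{[-h,0]}$ inside $H^{1}_{\rho}(-h,T;H)$.

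The core computation is the contraction estimate. For $v,w\in H^{1}_{0,\rho}(-h,T;H)$ one has $\Gamma_{\beta}v-\Gamma_{\beta}w=I_{\rho}(f_{v}-f_{w})$, where $f_{v}(t)=G(t,\pi_{\beta}((v+\hat\Phi)_{t}))$, extended by $0$ on $[-h,0]$. I would then chain the four ingredients already at hand: the operator bound $\norm{I_{\rho}}_{L_{2,\rho}\to H^{1}}\leq(1+\nicefrac{1}{\rho^{2}})^{\nicefrac{1}{2}}$; the $V_{\beta}$-Lipschitz bound $\norm{f_{v}(t)-f_{w}(t)}_{H}\leq L_{\beta}\norm{\pi_{\beta}((v+\hat\Phi)_{t})-\pi_{\beta}((w+\hat\Phi)_{t})}_{H^{1}(-h,0;H)}$; the fact that $\pi_{\beta}$ is $1$-Lipschitz, so the right-hand side is at most $L_{\beta}\norm{\Theta_{t}(v-w)}_{H^{1}(-h,0;H)}$; and finally \Cref{ThetaLip}, $\norm{\Theta}\leq(2\rho)^{\nicefrac{-1}{2}}$. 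Assembling these yields
\begin{equation*}
  \norm{\Gamma_{\beta}v-\Gamma_{\beta}w}_{H^{1}_{\rho}}\leq\bigl(1+\tfrac{1}{\rho^{2}}\bigr)^{\nicefrac{1}{2}}L_{\beta}(2\rho)^{\nicefrac{-1}{2}}\norm{v-w}_{H^{1}_{\rho}}\,\text{,}
\end{equation*}
whose prefactor tends to $0$ as $\rho\to\infty$. Fixing $\rho$ so large that this prefactor is $<1$, the contraction mapping principle gives a unique fixed point $v$, hence a unique solution $u\coloneq v+\hat\Phi$ of \Cref{FDEProj} on the \emph{entire} interval $[0,T]$; localisation in time is not needed here, the weight having done the work a shrinking interval does in the classical proof.

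The step I expect to be the real obstacle is transferring this back to \Cref{FDE}, i.e.\ showing that $\pi_{\beta}$ is inactive near $t=0$. Here I would use that, by \Cref{Gammaselfmapping}, the integrand $t\mapsto G(t,\pi_{\beta}(u_{t}))$ is continuous, so $u'=G(\argdot,\pi_{\beta}(u_{\argdot}))$ is continuous on $(0,T]$ with one-sided value $u'(0^{+})=G(0,\pi_{\beta}(\Phi))=G(0,\Phi)$, of norm $<\beta$ by the choice of $\beta$. Continuity then furnishes $T_{0}>0$ with $\norm{u'(r)}_{H}\leq\beta$ for $r\in[0,T_{0}]$; together with $\norm{\Phi'}_{\infty}\leq\alpha\leq\beta$ on the part of the history window lying in $[-h,0]$, this gives $\operatorname{ess\,sup}_{r\in(t-h,t)}\norm{u'(r)}_{H}\leq\beta$, i.e.\ $u_{t}\in V_{\beta}$, for every $t\in[0,T_{0}]$. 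On this interval $\pi_{\beta}(u_{t})=u_{t}$, so $u$ satisfies $u'(t)=G(t,u_{t})$ together with $u=\Phi$ on $[-h,0]$, and is therefore a local solution in the sense of \Cref{DefSol}.

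For uniqueness I would run the same continuity argument in reverse. Any local solution $\tilde u$ of \Cref{FDE} has $\tilde u'=G(\argdot,\tilde u_{\argdot})$ near $0$ with $\tilde u'(0^{+})=G(0,\Phi)$, so $\tilde u_{t}\in V_{\beta}$ on some $[0,\delta]$; there $\pi_{\beta}(\tilde u_{t})=\tilde u_{t}$, whence $\tilde u$ solves \Cref{FDEProj}, equivalently \Cref{FPPProj}, on $[0,\delta]$. Since the contraction constant above is independent of the interval length, the fixed point is unique on $[0,\delta]$ as well, and by the causal (Volterra) structure of $\Gamma_{\beta}$ the restriction of $u$ solves the same problem there; hence $\tilde u=u$ on $[0,\delta]$, and a standard continuation argument propagates the agreement across the common interval of existence. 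The one subtlety I would flag throughout is the admissibility of passing to the larger threshold $\beta$, which is exactly what almost uniform Lipschitz continuity guarantees by providing a Lipschitz constant on every $V_{\beta}$.
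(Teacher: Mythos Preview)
Your proof is correct and follows the same route as the paper: contraction of $\Gamma$ via the $I_\rho$--$G$--$\pi$--$\Theta$ chain in the weighted norm, then continuity of $u'$ to deactivate the projection near $t=0$, with uniqueness coming from the fixed point. Your preliminary enlargement $\alpha\to\beta>\norm{G(0,\Phi)}_H$ is in fact more careful than the paper, which asserts $\norm{u'|_{[-h,T_0]}}_\infty\le\alpha$ for some $T_0>0$ without having secured $\norm{G(0,\Phi)}_H\le\alpha$; likewise your uniqueness discussion (causality of $\Gamma_\beta$ plus interval-independence of the contraction constant) spells out what the paper compresses into the single clause ``uniqueness is assured by uniqueness of the fixed point.''
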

\begin{proof}
  We show that the IVP (\ref{FDEProj}) admits a global solution first. Appealing to the previous lemma, for this we need to show that the FPP (\ref{FPPProj}) has a unique solution for large enough $\rho$. We do this by invoking the contraction mapping principle. We have already shown in \Cref{Gammaselfmapping} that $\Gamma_{\alpha}$ is a self-mapping, we only need to verify that it is a contraction. We estimate for $u,v \in H^{1}_{0,\rho}\left(-h,T;H\right)$:
  \begin{align*}
    &\norm[\big]{\Gamma_{\alpha}v-\Gamma_{\alpha}w}^{2}_{H^{1}_{\rho}\left(-h,T;H\right)}
     =\norm[\big]{I_{\rho} \bigl[G\bigl(.,\pi_{\alpha}\bigl((v-\hat{\Phi})_{(\argdot)}\bigr)\bigr) - G\bigl(.,\pi_{\alpha}\bigl((w-\hat{\Phi})_{(\argdot)}\bigr)\bigr) \bigr]}^{2}_{H^{1}_{\rho}}\\
    &\qquad\leq \norm{I_{\rho}}_{L_{2,\rho}\rightarrow H_{\rho}^{1}}^{2} \norm[\big]{G\bigl(.,\pi_{\alpha}\bigl((v-\hat{\Phi})_{(\argdot)}\bigr)\bigr) - G\bigl(.,\pi_{\alpha}\bigl((w-\hat{\Phi})_{(\argdot)}\bigr)\bigr)}^{2}_{L_{2,\rho}}\\
    &\qquad\leq \left(1+\tfrac{1}{\rho^{2}}\right) \int_0^T \norm[\big]{G\bigl(t,\pi_{\alpha}\bigl((v-\hat{\Phi})_{t}\bigr)\bigr) -
      G\bigl(t,\pi_{\alpha}\bigl((w-\hat{\Phi})_{t}\bigr)\bigr)}_{H}^{2}\e^{-2\rho t}\dx[t]\\
    &\qquad\leq \left(1+\tfrac{1}{\rho^{2}}\right) \int_0^T L_{\alpha}^{2}\norm[\big]{\pi_{\alpha}((v-\hat{\Phi})_{t}) -
      \pi_{\alpha}((w-\hat{\Phi})_{t})}_{H^{1}(-h,0;H)}^{2}\e^{-2\rho t}\dx[t]\\
    &\qquad\leq \left(1+\tfrac{1}{\rho^{2}}\right)L_{\alpha}^{2} \int_0^T \norm{(v-\hat{\Phi})_{t} - (w-\hat{\Phi})_{t}}_{H^{1}(-h,0;H)}^{2}\e^{-2\rho t}\dx[t]\\
    &\qquad= \left(1+\tfrac{1}{\rho^{2}}\right)L_{\alpha}^{2} \int_0^T \norm{v_{t} - w_{t}}_{H^{1}(-h,0;H)}^{2}\e^{-2\rho t}\dx[t]\\
    &\qquad= \left(1+\tfrac{1}{\rho^{2}}\right)L_{\alpha}^{2} \norm[\big]{\Theta v - \Theta w}^{2}_{L_{2,\rho}(0,T;H^{1}(-h,0;H))}\\
    &\qquad\leq \left(1+\tfrac{1}{\rho^{2}}\right)L_{\alpha}^{2} \tfrac{1}{2\rho}\norm{v-w}_{H^{1}_{\rho}\left(-h,T;H\right)}^{2}
  \end{align*}
  For $\rho$ large enough this makes $\Gamma_{\alpha}$ a contraction. Hence we obtain a fixed point of \Cref{FPPProj} and therefore a solution of \Cref{FDEProj} by \Cref{EquivModified}. Since a solution of \Cref{FDEProj} can be differentiated weakly and on $[0,T]$ that derivative is continuous by virtue of it satisfying \Cref{FDEProj}, we infer that up to some positive time $T_{0}\leq T$ the solution satisfies $\norm{u'|_{[-h,T_{0}]}}_{\infty}\leq \alpha$. Consequently the metric projection is simply the identity up to $T_{0}$. This proves local existence, uniqueness is assured by uniqueness of the fixed point.
\end{proof}
As for the case of ODEs without delay we can state a global version of the theorem:
\begin{theorem}[Picard--Lindel\"of for DDEs -- global version]
  \label{PLglobal}
  The unique local solution $v$ of \Cref{FDE} from \Cref{PLlocal} can be extended to some maximal $T_{0}\in \mathbb{R}_{+}$ such that $\underset{t\uparrow T_{0}}{\operatorname{lim}}\norm{v'|_{(-h,t)}}_{\infty}=\infty$ or already $T_{0}=T$.
\end{theorem}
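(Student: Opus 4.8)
The plan is to run the classical continuation argument familiar from ODEs, transcribed into the $H^{1}_{\rho}$-setting. First I would assemble all local solutions into a single maximal one. Let $\mathcal{S}$ be the set of those $T_{1}\in(0,T]$ for which \Cref{FDE} possesses a solution on $[-h,T_{1}]$; by \Cref{PLlocal} this set is nonempty, and the uniqueness part of \Cref{PLlocal} guarantees that any two such solutions agree on the intersection of their domains. Hence they patch together into one function $v$ on $[-h,T_{0})$, where $T_{0}\coloneq\sup\mathcal{S}$, solving \Cref{FDE} on every compact subinterval. This is the claimed maximal extension.

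Next I would observe that $t\mapsto\norm{v'|_{(-h,t)}}_{\infty}=\operatorname{ess}\sup_{s\in(-h,t)}\norm{v'(s)}_{H}$ is non-decreasing, being a supremum over a growing interval. Consequently the limit $\lim_{t\uparrow T_{0}}\norm{v'|_{(-h,t)}}_{\infty}$ exists in $[0,+\infty]$ and equals $\norm{v'|_{(-h,T_{0})}}_{\infty}$; in particular it is finite exactly when $v'$ is essentially bounded on $(-h,T_{0})$. It therefore suffices to prove the dichotomy: if $T_{0}<T$, this limit is $+\infty$.

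I would establish this by contraposition. Assume $T_{0}<T$ and $\norm{v'|_{(-h,T_{0})}}_{\infty}\leq M<\infty$. Then $v$ is $M$-Lipschitz on $[-h,T_{0})$, hence uniformly continuous, and extends to $[-h,T_{0}]$ with $v\in H^{1}(-h,T_{0};H)$ and $\norm{v'}_{\infty}\leq M$; in particular the terminal history $\Psi\coloneq v_{T_{0}}=\Theta_{T_{0}}v$ lies in $V_{M}$. I then restart at time $T_{0}$: translating time by $T_{0}$ turns the IVP on $[T_{0},T]$ with prehistory $\Psi$ and right-hand side $G$ into an instance of \Cref{FDE} on $[0,T-T_{0}]$ with prehistory $\Psi\in V_{M}$, and $G$ retains the almost uniform Lipschitz property with the same constants. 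Choosing $\alpha'>\max\{M,\norm{G(T_{0},\Psi)}_{H}\}$ so that $\Psi\in V_{\alpha'}$, \Cref{PLlocal} yields a unique local solution $w$ on some $[T_{0},T_{0}+\delta]$ with $\delta>0$; continuity of $w'$ at $T_{0}$, where $w'(T_{0})=G(T_{0},\Psi)$, keeps $\norm{w'}_{\infty}\leq\alpha'$ for small $\delta$, so the projection $\pi_{\alpha'}$ stays inactive and $w$ genuinely solves \Cref{FDE}. Gluing $v$ and $w$ along $T_{0}$ produces a function on $[-h,T_{0}+\delta]$ that solves \Cref{FDE} on $(0,T_{0}+\delta]$, since for $t\in(T_{0},T_{0}+\delta)$ the history $w_{t}$ is precisely the continuation of the original trajectory. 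Taking $\delta$ small enough that $T_{0}+\delta\leq T$ contradicts $T_{0}=\sup\mathcal{S}$.

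The step demanding the most care is this restart, and I expect two points to be the main obstacle. The first is admissibility of the new prehistory: one must know that $\Psi$, and indeed every history $v_{t}$ traversed, lies in the domain on which $G$ is almost uniformly Lipschitz. Here an essentially bounded derivative keeps $v$, and hence each $v_{t}$, in a bounded subset of $H^{1}(-h,0;H)$ (the map $t\mapsto v_{t}$ being continuous by \Cref{ThetaTimeCont}); under the standing understanding that $G$ is defined on $[0,T]\times H^{1}(-h,0;H)$ this makes the restart legitimate, and the derivative blow-up is genuinely the only obstruction. The second is that gluing two $H^{1}$-solutions at the common point $T_{0}$ returns an $H^{1}$-solution rather than a merely continuous one: the matching value $v(T_{0})=w(T_{0})$ is exactly what rules out an atomic contribution to the weak derivative, so that $v'$ and $w'$ concatenate. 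Everything else is a faithful translate of \Cref{PLlocal}; note in particular that only a single extension of positive length is needed, so no uniform lower bound on the local existence time is required.
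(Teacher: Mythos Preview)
Your argument is correct and follows the classical ODE continuation scheme: collect all local solutions into a maximal one on $[-h,T_{0})$, and if $T_{0}<T$ with bounded derivative, restart at $T_{0}$ to derive a contradiction. The paper proceeds differently: rather than restarting at a later time, it stays on the fixed interval $[0,T]$ throughout and instead enlarges the projection parameter, solving the modified problem~(\ref{FDEProj}) globally for a doubling sequence $\alpha_{n}=2^{n-1}\alpha_{1}$. Each such solution $x_{n}$ agrees with the genuine solution of~(\ref{FDE}) up to the first time $t_{n}^{\star}$ at which $\norm{x_{n}'}_{\infty}$ reaches $\alpha_{n}$, and consistency of the $x_{n}$ follows from uniqueness of the projected fixed point. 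The blow-up alternative then drops out directly from the construction: if the derivative stays bounded by some $M$, one simply picks $n$ with $\alpha_{n}>M$ and the projected solution $x_{n}$ is already a global solution of~(\ref{FDE}).

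What each approach buys: the paper's route avoids the restart machinery entirely---no time translation of $G$, no gluing of two $H^{1}$-pieces, no need to verify that the terminal history sits in the domain of a freshly posed IVP---because the projected problem is solvable on all of $[0,T]$ from the outset. Your route is the more familiar ODE template and makes the logical structure of the dichotomy (bounded derivative $\Rightarrow$ can continue) completely transparent; it also does not rely on the particular device of the metric projection beyond its use inside \Cref{PLlocal}. You are also more explicit than the paper about the tacit hypothesis that $G$ is defined on all of $[0,T]\times H^{1}(-h,0;H)$ (or at least on a set containing every history the trajectory visits); the paper's proof needs this just as much, since $\pi_{\alpha}$ maps into $V_{\alpha}$ rather than into the neighbourhood $U$.
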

\begin{proof}
  We follow the usual strategy: By fixing some initial $\alpha_{1}>\norm{\Phi'}_{\infty}$ we can appeal to \Cref{PLlocal} to obtain a solution $x_{1}$ up some time $t^{\star}_{1}\leq T$. Then we can increase to $\alpha_{2}\coloneq 2\alpha_{1}$ and obtain a solution $x_{2}$ up to some time $t^{\star}_{1}\leq t^{\star}_{2}\leq T$. Note that $x_{2}|_{[-h,t^{\star}_{1}]}=x_{1}$. Indeed, since $t^{\star}_{1}$ was such that the unique solution of \Cref{FDE} stays in $V_{\alpha_{1}}$ and since $x_{2}$ solves \Cref{FDE} locally as well, $\pi_{\alpha_{1}}\left(x_{2}|_{[-h,t^{\star}_{1}]}\right)=x_{2}|_{[-h,t^{\star}_{1}]}$ and by unique solvability $x_{2}|_{[-h,t^{\star}_{1}]}=x_{1}$. We can iterate this process with $\alpha_{n}\coloneq 2^{n-1}\alpha_{1}$ and obtain $T_{0}\coloneq \lim_{n \to \infty} t_{n}^{\star}$ as monotone limit of positive numbers and a unique solution $x$ defined as $x(t)\coloneq x_{n}(t)$ for $t\in [-h,t_{n}^{\star}]$, which is well-defined by the preceeding argument.\\
  We move on to the blow-up part. We assume that $\lim_{n \to \infty} \norm{x'|_{[-h,t_{n}^{\star}]}}_{\infty}\eqcolon M<\infty$. In this case we obtain a maximal solution $x$ as above and need to show that $x\in H^{1}_{\rho}\left(-h,T;H\right)$. We estimate:
  \begin{equation*}
    \norm{x'}_{L_{2,\rho}(-h,T;H)}^{2}= \int_{-h}^T \norm{x'(t)}^{2}_{H}\e^{-2\rho t}\dx[t]
                                    \leq M^{2}\int_{-h}^T \e^{-2\rho t}\dx[t]
                                    \leq \tfrac{M^{2}}{2\rho}\e^{2\rho h}
  \end{equation*}
  and
  \begin{align*}
    &\norm{x}_{L_{2,\rho}(-h,T;H)}^{2}= \int_{-h}^T \norm{x(t)}^{2}_{H}\e^{-2\rho t}\dx[t]\\
    &\qquad= \int_{-h}^0 \norm{x(t)}_{H}^{2}\e^{-2\rho t} \dx[t] + \int_0^T \norm{x(t)}_{H}^{2} \e^{-2\rho t} \dx[t]\\
    &\qquad= \int_{-h}^0 \norm{x(t)}_{H}^{2}\e^{-2\rho t} \dx[t] + \int_0^T \norm[\bigg]{x(0) + \int_0^t x'(s) \dx[s]}_{H}^{2} \e^{-2\rho t} \dx[t]\\
    &\qquad\leq \norm{\Phi}_{L_{2}(-h,0;H)}^{2} \e^{2\rho h} + 2 \norm{x(0)}_{H}^{2} \int_{0}^T \e^{-2\rho t} \dx[t] + 2 \int_{0}^T \norm[\bigg]{\int_{0}^t x'(s) \dx[s]}_{H}^{2}\e^{-2\rho t} \dx[t]\\
    &\qquad\leq \norm{\Phi}_{L_{2}(-h,0;H)}^{2} \e^{2\rho h} + \tfrac{\norm{x(0)}_{H}^{2}}{\rho} \e^{2\rho h} + 2 \int_{0}^T \norm{(I_{\rho}x')(t)}_{H}^{2} \e^{-2\rho t} \dx[t]\\
    &\qquad= \left(\norm{\Phi}_{2}^{2}+\tfrac{\norm{x(0)}_{H}^{2}}{\rho}\right)\e^{2\rho h} + 2 \norm{I_{\rho}x'}_{L_{2,\rho}(0,T;H)}^{2}\\
    &\qquad\leq \left(\norm{\Phi}_{2}^{2}+\tfrac{\norm{x(0)}_{H}^{2}}{\rho}\right)\e^{2\rho h} + \tfrac{2}{\rho^{2}} \norm{x'}_{L_{2,\rho}(0,T;H)}^{2}
  \end{align*}
  All terms in the last line are finite.\\
  Conversely, if $T_{0}<T$ and $x\notin H^{1}(-h,T;H)$ we can verify $\lim_{n \to \infty} \norm{x'|_{[-h,t_{n}^{\star}]}}_{\infty}=\infty$. This is but trivial, because by definition we chose $t_{n}^{\star}$ maximal such that $x'|_{[-h,t_{n}^{\star}]}\in V_{\alpha_{n}}$ and therefore if the limit does not terminate in $T$, the desired limit needs to be $+\infty$.
\end{proof}
To finish the discussion of well-posedness we elaborate that the solutions depend continuously on initial data and the right-hand side:
\begin{theorem}[continuous dependence on initial datum]
  Let $\alpha>0$. Then there exists $C>0$ such that for any given $\Phi,\Psi \in V_{\alpha}$ the corresponding solutions $u$ and $v$ of the IVP \Cref{FDE} to initial data $\Phi$ and $\Psi$ respectively exist up to some common time $T_{0}\leq T$. For any $T_{1}<T_{0}$ the restrictions satisfy $u|_{[-h,T_{1}]},v|_{[-h,T_{1}]}\in H^{1}_{\rho}(-h,T_{1};H)$ and the estimate
  \begin{equation*}
    \norm{u-v}_{H^{1}_{\rho}(-h,T_{1};H)}\leq C\norm{\Phi-\Psi}_{H^{1}(-h,0;H)}\,\text{.}
  \end{equation*}
\end{theorem}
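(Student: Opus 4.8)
The plan is to realise both solutions as fixed points of the projected map from \Cref{PLlocal} and to compare those fixed points using the contraction structure. Since $\Phi,\Psi\in V_\alpha$, \Cref{PLlocal} yields local solutions $u$ (to datum $\Phi$) and $v$ (to datum $\Psi$), each staying in $V_\alpha$ up to its own exit time; taking $T_0$ to be the minimum of the two exit times gives the common existence interval, and the membership $u|_{[-h,T_1]},v|_{[-h,T_1]}\in H^1_\rho(-h,T_1;H)$ is immediate from \Cref{DefSol}. I would then fix one weight $\rho$ large enough that the contraction constant
\begin{equation*}
  k\coloneq \left(1+\tfrac{1}{\rho^{2}}\right)^{\nicefrac{1}{2}} L_{\alpha}\,(2\rho)^{\nicefrac{-1}{2}}
\end{equation*}
appearing in the proof of \Cref{PLlocal} satisfies $k<1$; crucially the \emph{same} $k$ serves both data, because the almost uniform Lipschitz constant $L_{\alpha}$ depends on $\alpha$ only. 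On $[-h,T_{1}]$ with $T_{1}<T_{0}$ both solutions obey $\norm{u'|_{[-h,T_{1}]}}_{\infty}\leq\alpha$, so the metric projection is inactive there and, writing $\Gamma^{\Phi}_{\alpha},\Gamma^{\Psi}_{\alpha}$ for the map of \Cref{PLlocal} built from $\hat{\Phi}$ resp.\ $\hat{\Psi}$ and now posed on $H^{1}_{0,\rho}(-h,T_{1};H)$, the restrictions split as $u=p+\hat{\Phi}$, $v=q+\hat{\Psi}$ with $p=\Gamma^{\Phi}_{\alpha}p$ and $q=\Gamma^{\Psi}_{\alpha}q$.

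The heart of the argument is the comparison of two fixed points of nearby contractions. I would estimate
\begin{equation*}
  \norm{p-q}_{H^{1}_{\rho}} = \norm{\Gamma^{\Phi}_{\alpha}p-\Gamma^{\Psi}_{\alpha}q}_{H^{1}_{\rho}} \leq \norm{\Gamma^{\Phi}_{\alpha}p-\Gamma^{\Phi}_{\alpha}q}_{H^{1}_{\rho}} + \norm{\Gamma^{\Phi}_{\alpha}q-\Gamma^{\Psi}_{\alpha}q}_{H^{1}_{\rho}} \leq k\norm{p-q}_{H^{1}_{\rho}} + \norm{\Gamma^{\Phi}_{\alpha}q-\Gamma^{\Psi}_{\alpha}q}_{H^{1}_{\rho}},
\end{equation*}
so that $\norm{p-q}_{H^{1}_{\rho}}\leq(1-k)^{-1}\norm{\Gamma^{\Phi}_{\alpha}q-\Gamma^{\Psi}_{\alpha}q}_{H^{1}_{\rho}}$. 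The point of evaluating both maps at the \emph{same} argument $q$ is that the unknown drops out: since
\begin{equation*}
  \Gamma^{\Phi}_{\alpha}q-\Gamma^{\Psi}_{\alpha}q = I_{\rho}\bigl[G\bigl(\argdot,\pi_{\alpha}\bigl((q+\hat{\Phi})_{(\argdot)}\bigr)\bigr) - G\bigl(\argdot,\pi_{\alpha}\bigl((q+\hat{\Psi})_{(\argdot)}\bigr)\bigr)\bigr]
\end{equation*}
and $(q+\hat{\Phi})-(q+\hat{\Psi})=\hat{\Phi}-\hat{\Psi}$, running the very chain of inequalities from the proof of \Cref{PLlocal} (boundedness of $I_{\rho}$, the bound $L_{\alpha}$, the Lipschitz constant $1$ of $\pi_{\alpha}$, and \Cref{ThetaLip}) gives $\norm{\Gamma^{\Phi}_{\alpha}q-\Gamma^{\Psi}_{\alpha}q}_{H^{1}_{\rho}}\leq k\norm{\hat{\Phi}-\hat{\Psi}}_{H^{1}_{\rho}}$. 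Hence $\norm{p-q}_{H^{1}_{\rho}}\leq\tfrac{k}{1-k}\norm{\hat{\Phi}-\hat{\Psi}}_{H^{1}_{\rho}}$, and by the triangle inequality $\norm{u-v}_{H^{1}_{\rho}}\leq(1-k)^{-1}\norm{\hat{\Phi}-\hat{\Psi}}_{H^{1}_{\rho}}$.

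It remains to trade the weighted norm of $\hat{\Phi}-\hat{\Psi}$ on $[-h,T_{1}]$ for $\norm{\Phi-\Psi}_{H^{1}(-h,0;H)}$. Writing $\delta\coloneq\Phi-\Psi$, the function $\hat{\Phi}-\hat{\Psi}$ equals $\delta$ on $[-h,0]$ and the constant $\delta(0)$ on $(0,T_{1}]$, with weak derivative $\delta'$ on $[-h,0]$ and $0$ thereafter; estimating $\e^{-2\rho t}\leq\e^{2\rho h}$ on $[-h,0]$, using $\int_{0}^{T_{1}}\e^{-2\rho t}\dx[t]\leq\tfrac{1}{2\rho}$, and bounding the boundary value $\norm{\delta(0)}_{H}$ by $\norm{\delta}_{H^{1}(-h,0;H)}$ through the Sobolev embedding \Cref{Sobolev} yields $\norm{\hat{\Phi}-\hat{\Psi}}_{H^{1}_{\rho}(-h,T_{1};H)}\leq C'\norm{\Phi-\Psi}_{H^{1}(-h,0;H)}$ with $C'$ depending only on $\rho$ and $h$ (and in particular uniform in $T_{1}\leq T$). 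Setting $C\coloneq C'/(1-k)$ then finishes the proof, and since $k$ and $C'$ depend on $\alpha,\rho,h$ but never on the individual pair $(\Phi,\Psi)$, the constant is uniform over $V_{\alpha}$ as required. The step I expect to need the most care is not the (routine) contraction chain but the bookkeeping around the common time $T_{0}$: one must ensure that on $[-h,T_{1}]$ both solutions genuinely lie in $V_{\alpha}$ so that the projections are inactive and the unprojected solutions really are the fixed points $p,q$ being compared, and that the constant thereby obtained is independent of $T_{1}$ and of the chosen data.
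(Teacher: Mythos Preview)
Your proof is correct and reaches the same final estimate as the paper, with the same dependence of the constant on $\alpha$, $\rho$ and $h$ only. The one structural difference lies in where the triangle inequality is placed. The paper splits at the solution level, writing $u-v=(\hat{\Phi}-\hat{\Psi})+\bigl((u-\hat{\Phi})-(v-\hat{\Psi})\bigr)$ and bounding the second summand directly as $\norm{I_{\rho}[G(\argdot,u_{(\argdot)})-G(\argdot,v_{(\argdot)})]}_{H^{1}_{\rho}}\leq k\norm{u-v}_{H^{1}_{\rho}}$, then absorbs that term into the left-hand side. You instead split at the fixed-point level, inserting $\Gamma^{\Phi}_{\alpha}q$ and invoking the standard ``nearby contractions have nearby fixed points'' device. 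Your route makes the contraction structure more explicit and is the more reusable template; the paper's direct estimate is marginally shorter because it never introduces the intermediate term $\Gamma^{\Phi}_{\alpha}q$. The bookkeeping you flag---ensuring both histories lie in $V_{\alpha}$ on $[-h,T_{1}]$ so that $\pi_{\alpha}$ is inactive and $L_{\alpha}$ applies---is exactly the point the paper handles by its remark on truncation, and your choice of $T_{0}$ as the minimum exit time from $V_{\alpha}$ is the clean way to do it.
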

\begin{proof}
  The first part of the statement is simply an application of \Cref{PLglobal}. To obtain the estimate we recall that being a solution to \Cref{FDE} is equivalent to $u-\hat{\Phi}$ and $v-\hat{\Psi}$ being solutions to the FPP (\ref{FPP}). We estimate thusly:
  \begin{equation*}
    \norm{u-v}_{H^{1}_{\rho}\left(-h,T_{1};H\right)}^{2}= \norm{\hat{\Phi}-\hat{\Psi}}^{2}_{H^{1}_{\rho}\left(-h,T_{1};H\right)} + \norm{(u-\hat{\Phi})-(v-\hat{\Psi})}_{H^{1}_{\rho}\left(0,T_{1};H\right)}^{2}
  \end{equation*}
  The first term can be estimated as follows:
  \begin{align*}
    \norm{\hat{\Phi}-\hat{\Psi}}^{2}_{H^{1}_{\rho}\left(-h,T_{1};H\right)} &\leq \norm{\Phi-\Psi}^{2}_{H^{1}_{\rho}(-h,0;H)} + \norm{\Phi(0)-\Psi(0)}_{L_{2,\rho}(0,T_{1};H)}^{2}\\
    &\leq \norm{\Phi-\Psi}^{2}_{H^{1}_{\rho}(-h,0;H)} + \norm{\Phi-\Psi}_{\infty}^{2} \int_0^{T_{1}} \e^{-2\rho t} \dx[t]\\
    &\leq \norm{\Phi-\Psi}^{2}_{H^{1}_{\rho}(-h,0;H)} + C \norm{\Phi-\Psi}^{2}_{H^{1}_{\rho}(-h,0;H)} \tfrac{1}{2\rho}\bigl(1 - \e^{-2\rho T_{1}}\bigr)\\
    &\leq \bigl(1+\tfrac{C}{2\rho}\bigr)\norm{\Phi-\Psi}^{2}_{H^{1}_{\rho}(-h,0;H)}
  \end{align*}
  For the second term we estimate
  \begin{align*}
    &\norm{(u-\hat{\Phi})-(v-\hat{\Psi})}_{H^{1}_{\rho}\left(0,T_{1};H\right)}^{2}
    = \norm[\big]{I_{\rho}\bigl[G\bigl(\argdot,\Theta_{(\argdot)}u\bigr) - G\bigl(\argdot,\Theta_{(\argdot)}v\bigr)\bigr]}_{H^{1}_{\rho}\left(0,T_{1};H\right)}^{2}\\
    &\qquad\qquad\leq \norm{I_{\rho}}_{L_{2,\rho}\to H^{1}_{\rho}}^{2} \norm[\big]{G\bigl(\argdot,\Theta_{(\argdot)}u\bigr) - G\bigl(\argdot,\Theta_{(\argdot)}v\bigr)}_{L_{2,\rho}\left(0,T_{1};H\right)}^{2}\\
    &\qquad\qquad\leq \bigl(1+\tfrac{1}{\rho^{2}}\bigr) \norm[\big]{L_{\alpha}\norm{u_{(\argdot)}- v_{(\argdot)}}_{H^{1}\left(-h,0;H\right)}}_{L_{2,\rho}\left(0,T_{1};H\right)}^{2}\\
    &\qquad\qquad= \bigl(1+\tfrac{1}{\rho^{2}}\bigr) L_{\alpha}^{2} \norm{\Theta u - \Theta v}_{L_{2,\rho}\left(0,T_{1};H^{1}_{\rho}\left(-h,0;H\right)\right)}^{2}\\
    &\qquad\qquad\leq \tfrac{L_{\alpha}^{2} \left(1+\tfrac{1}{\rho^{2}}\right)}{2\rho}\norm{u-v}_{H^{1}_{\rho}\left(-h,T_{1};H\right)}^{2}
  \end{align*}
  Note that we can make the initial calculations since by truncating the functions to $T_{1}$ they remain in $V_{\alpha}$ for some $\alpha>0$. Putting these two estimates together, demanding that $\rho$ be large enough and rearranging provides us with the inequality
  \begin{equation*}
    \norm{u-v}_{H^{1}_{\rho}\left(-h,T_{1};H\right)}\leq \tilde{C}\norm{\Phi-\Psi}_{H^{1}_{\rho}\left(-h,0;H\right)}\,\text{.}
  \end{equation*}
  Finally, appealing to the equivalence of the weighted and unweighted norms produces the desired result for all $\rho$.
\end{proof}
The following theorem on dependence on the rhs is in the spirit of \cite[Thm.~3.6 and Rem.~3.7]{Kalauch2014}.
\begin{theorem}[continuous dependence on the rhs]
  Let $\alpha>0$. Then there exists $C>0$ such that for any $\Phi\in H^{1}(-h,0;H)$, any open neighbourhood $U$ of $\Phi$ and almost uniformly Lipschitz-continuous functions $F,G\colon [0,T]\times U\to H$ there exist solutions $u$ and $v$ to the IVP (\ref{FDE}) to rhs $F$ and $G$ respectively up to some common time $T_{0}\leq T$. For any $T_{1}<T_{0}$ the restrictions satisfy $u|_{[-h,T_{1}]},v|_{[-h,T_{1}]} \in H^{1}_{\rho}\left(-h,T;H\right)$ and the estimate
  \begin{equation*}
    \norm{u-v}_{H^{1}_{\rho}(-h,T_{1};H)}\leq C\sup_{t\in [0,T_{1}]}\norm{F(t,u_{t})-G(t,u_{t})}_{H}\,\text{.}
  \end{equation*}
\end{theorem}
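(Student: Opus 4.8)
The plan is to follow the proof of continuous dependence on the initial datum, using crucially that here both problems carry the \emph{same} prehistory $\Phi$, so the $\hat{\Phi}$-contributions cancel and only the difference of the right-hand sides survives.

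First I would dispose of the existence statement. Fixing $\alpha$ with $\Phi\in V_{\alpha}$, the almost uniform Lipschitz hypothesis supplies a single constant $L_{\alpha}$ valid for both $F$ and $G$ on $V_{\alpha}$. Applying \Cref{PLlocal} to each right-hand side and then \Cref{PLglobal}, we obtain solutions $u$ (for $F$) and $v$ (for $G$), each continued as a genuine solution up to the first time its derivative leaves $\{\norm{\argdot}_{\infty}\le\alpha\}$; call these times $T_{0}^{F},T_{0}^{G}$ and set $T_{0}\coloneq\min\{T_{0}^{F},T_{0}^{G}\}$. For every $T_{1}<T_{0}$ both $u$ and $v$ remain in $V_{\alpha}$ on $[-h,T_{1}]$, so $\pi_{\alpha}$ acts as the identity on $u_{t}$ and on $v_{t}$ for $t\le T_{1}$; this is precisely what permits replacing the projected right-hand sides by the genuine ones and invoking the uniform constant $L_{\alpha}$ below.

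The core estimate then proceeds as follows. From the fixed point formulation (\ref{FPP}) we have $u=\hat{\Phi}+I_{\rho}F(\argdot,u_{(\argdot)})$ and $v=\hat{\Phi}+I_{\rho}G(\argdot,v_{(\argdot)})$; subtracting, the shared $\hat{\Phi}$ cancels and $u-v=I_{\rho}\bigl[F(\argdot,u_{(\argdot)})-G(\argdot,v_{(\argdot)})\bigr]\in H^{1}_{0,\rho}(-h,T_{1};H)$. Inserting the cross term $G(\argdot,u_{(\argdot)})$ I split
\[
  F(\argdot,u_{(\argdot)})-G(\argdot,v_{(\argdot)})=\bigl[F(\argdot,u_{(\argdot)})-G(\argdot,u_{(\argdot)})\bigr]+\bigl[G(\argdot,u_{(\argdot)})-G(\argdot,v_{(\argdot)})\bigr],
\]
where the first bracket is pointwise bounded by $S\coloneq\sup_{t\in[0,T_{1}]}\norm{F(t,u_{t})-G(t,u_{t})}_{H}$ and the second is controlled by the Lipschitz estimate $L_{\alpha}\norm{u_{t}-v_{t}}_{H^{1}(-h,0;H)}$. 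Using $\norm{I_{\rho}}_{L_{2,\rho}\to H^{1}_{\rho}}\le(1+\rho^{-2})^{\nicefrac{1}{2}}$, the triangle inequality, the crude bound $\int_{0}^{T_{1}}\e^{-2\rho t}\dx[t]\le\tfrac{1}{2\rho}$ for the first summand, and \Cref{ThetaLip} for the second, I reach
\[
  \norm{u-v}_{H^{1}_{\rho}(-h,T_{1};H)}\le\bigl(1+\tfrac{1}{\rho^{2}}\bigr)^{\nicefrac{1}{2}}\tfrac{1}{(2\rho)^{\nicefrac{1}{2}}}S+\bigl(1+\tfrac{1}{\rho^{2}}\bigr)^{\nicefrac{1}{2}}\tfrac{L_{\alpha}}{(2\rho)^{\nicefrac{1}{2}}}\norm{u-v}_{H^{1}_{\rho}(-h,T_{1};H)}.
\]

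Finally I would close by absorption. Choosing $\rho$ so large that $(1+\rho^{-2})^{\nicefrac{1}{2}}L_{\alpha}(2\rho)^{-\nicefrac{1}{2}}<1$ -- exactly the smallness already exploited in \Cref{PLlocal} -- I move the last term to the left and divide, obtaining the asserted bound with a constant $C$ depending on $\alpha$, $L_{\alpha}$ and the fixed $\rho$, but not on $\Phi$; since the supremum on the right carries no weight, the estimate holds for every $\rho$ after appealing, as in the previous theorem, to the equivalence of the weighted $H^{1}_{\rho}$-norms. I expect the only genuine obstacle to lie in the first paragraph: arranging a \emph{common} continuation time on which both solutions stay in $V_{\alpha}$, so that the single constant $L_{\alpha}$ governs both right-hand sides and the two projections trivialise simultaneously. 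The analytic estimate itself is a routine variant of the contraction computation in \Cref{PLlocal}.
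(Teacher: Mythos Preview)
Your proposal is correct and follows essentially the same route as the paper's proof: subtract the two fixed-point identities, insert the cross term $G(\argdot,u_{(\argdot)})$, bound the $(F-G)$ piece by the supremum and the $G(u)-G(v)$ piece via the Lipschitz constant together with \Cref{ThetaLip}, and then absorb by choosing $\rho$ large. The only cosmetic differences are that the paper retains the projection $\pi_{\alpha}$ throughout the estimate (using its Lipschitz constant $1$) rather than dropping it at the outset, and works with squared norms rather than applying the triangle inequality directly; neither of these affects the argument.
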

\begin{proof}
  The first part of the statement is clear: We simply apply \Cref{PLglobal} to show that the restrictions are in $H^{1}_{\rho}\left(-h,T_{1};H\right)$. To obtain the estimate we recall that being a solution to \Cref{FDE} is equivalent to $u-\hat{\Phi}$ and $v-\hat{\Psi}$ being solutions of the fixed point \cref{FPP}. We can therefore estimate:
  \begin{align*}
    &\norm{u-v}_{H^{1}_{\rho}\left(-h,T_{1};H\right)}^{2}= \norm[\big]{I_{\rho}\bigl[F\bigl(\argdot,\pi_{\alpha}(u_{(\argdot)})\bigr)
                                                      - G\bigl(\argdot,\pi_{\alpha}(v_{(\argdot)})\bigr)\bigr]}_{H^{1}_{\rho}\left(0,T_{1};H\right)}^{2} \\
    &\qquad\leq \bigl(1+\tfrac{1}{\rho^{2}}\bigr) \norm[\big]{F\bigl(\argdot,\pi_{\alpha}(u_{(\argdot)})\bigr)
      - G\bigl(\argdot,\pi_{\alpha}(v_{(\argdot)})\bigr)}_{L_{2,\rho}\left(0,T_{1};H\right)}^{2} \\
                                                    &\qquad= \bigl(1+\tfrac{1}{\rho^{2}}\bigr) \int_0^{T_{1}} \norm[\big]{F\bigl(t,\pi_{\alpha}(u_{t})\bigr)
                                                      - G\bigl(t,\pi_{\alpha}(v_{t})\bigr)}_{H}\e^{-2\rho t}\dx[t] \\
    &\qquad\leq \bigl(1+\tfrac{1}{\rho^{2}}\bigr) \Bigl[ \int_0^{T_{1}} \norm[\big]{F\bigl(t,\pi_{\alpha}(u_{t})\bigr)
      - G\bigl(t,\pi_{\alpha}(u_{t})\bigr)}_{H}^{2}\e^{-2\rho t}\dx[t] \Bigr.\\
    &\qquad\qquad + \Bigl. L_{G}^{2}\int_0^{T_{1}} \norm{\pi_{\alpha}(u_{t})
      - \pi_{\alpha}(v_{t})}_{H^{1}(-h,0;H)}^{2}\e^{-2\rho t}\dx[t] \Bigr]
  \end{align*}
  We can estimate the second term in the usual fashion:
  \begin{align*}
    \int_0^{T_{1}} \norm{\pi_{\alpha}(u_{t})
    - \pi_{\alpha}(v_{t})}_{H^{1}}^{2}\e^{-2\rho t}\dx[t]
    &\leq \int_0^{T_{1}} \norm{u_{t} - v_{t}}_{H^{1}(-h,0;H)}^{2}\e^{-2\rho t}\dx[t]\\
    &= \norm{\Theta u - \Theta v}_{L_{2,\rho}(-h,T_{1};H^{1}(-h,0;H))}^{2} \\
    &\leq \tfrac{1}{2\rho} \norm{u-v}_{H^{1}_{\rho}(-h,T;H)}^{2}
  \end{align*}
  The first term we can estimate using the fact that $t\mapsto u_{t}$ is a continuous function by \Cref{ThetaTimeCont}:
  \begin{align*}
    \int_0^{T_{1}} \norm[\big]{F\bigl(t,\pi_{\alpha}(u_{t})\bigr)
      - G\bigl(t,\pi_{\alpha}(u_{t})\bigr)}_{H}^{2}\e^{-2\rho t}\dx[t] \leq \tfrac{1}{2\rho} \sup_{t\in [0,T_{1}]}\norm[\big]{F(t,u_{t})-G(t,u_{t})}_{H}^{2}
  \end{align*}
  Hence:
  \begin{align*}
    \norm{u-v}_{H^{1}_{\rho}\left(-h,T_{1};H\right)}^{2}&\leq C_{1}\sup_{t\in [0,T_{1}]}\norm{F(t,u_{t})-G(t,u_{t})}_{H}^{2} + C_{2}L_{G}^{2}\norm{u-v}_{H^{1}_{\rho}(-h,T;H)}^{2}
  \end{align*}
  We now can make $\rho$ large enough to force $C_{2}<1$. Rearranging the inequality yields:
  \begin{equation*}
    \norm{u-v}_{H^{1}_{\rho}\left(-h,T_{1};H\right)}\leq C \sup_{t\in [0,T_{1}]}\norm{F(t,u_{t})-G(t,u_{t})}_{H} \qedhere
  \end{equation*}
\end{proof}

%%%%%%%%%%%%%%%%%%%%%%%%%%%%%
% Section
\section{Applications}
We outline three applications of our selected results. An additional example warranting its own article is dicussed in \cite{Aigner2024}.

\subsection{Constant delay}
A trivial case, that is nevertheless worth mentioning is the case of constant delay. Problems of the form
\begin{alignat*}{4}
    x'(t)&=G(t,x(t-t_{1}),\dots,x(t-t_{n})) &&\quad\text{for } &&& 0<t\leq T\,\text{,}\\
    x(t)&=\Phi &&\quad\text{for } &&& -h\leq t\leq 0\>\,\text{,}
\end{alignat*}
where $0\leq t_{1}\leq \dots \leq t_{n}\leq h$ have a unique solution provided that $G\colon [0,T]\times U^{n}\to H$ is locally Lipschitz-continuous with respect to the $H^{n}$-variable and that the prehistory satisfies $\norm{\Phi'}_{\infty}<\infty$ according to \Cref{PLglobal}. We simply observe that the rhs can be written as $G(t,r(u_{t}))$, where
\begin{equation*}
  r\colon\,H^{1}(-h,0;H)\to H^{n}\qquad \varphi\mapsto \left(\varphi(-t_{1}),\dots,\varphi(-t_{n})\right)^{T}\text{.}
\end{equation*}
Since the evaluation functional is Lipschitz-continuous on any $V_{\alpha}$ according to \Cref{evLip} the composition $G(\argdot,r(u_{(\argdot)}))$ is almost uniformly Lipschitz-continuous.

\subsection{An academic example}
\label{academicEx}
We consider a delay of the form $\abs{x(t)}$ in the simplest possible IVP:
\begin{align*}
  x'(t)&= x(t-\abs{x(t)}) &&\text{for}\qquad \quad\, t>0\\
  x(t)&= \Phi(t) &&\text{for}-h\leq t \leq 0
\end{align*}
The importance of this example lies in the fact, that it admits non-unique solutions for continuous initial prehistories, cf. \cite[Ex.~4.3]{Waurick2023}. This fact proves to be problematic in solution theory of DDEs, for instance in the semigroup approach to DDEs. It highlights, that the problem lies with the initial prehistories, not the equation itself. We verify, that the system satisfies the requirements of our well-posedness result \Cref{PLlocal}. If we wish, we can write the equation using the delay functional $\tau\colon\,\varphi\mapsto \abs{\varphi(0)}$. $\tau$ is a Lipschitz-continuous functional, since for $\varphi,\psi \in H^{1}(-h,0;H)$ we can estimate:
\begin{alignat*}{3}
  \abs{\tau(\varphi)-\tau(\psi)}&= \abs{\abs{\varphi(0)}-\abs{\psi(0)}} &&\leq \abs{\varphi(0)-\psi(0)}\\
                                 &\leq \norm{\varphi-\psi}_{\infty} &&\leq C \norm{\varphi-\psi}_{H^{1}(-h,0;H)}
\end{alignat*}
where the second inequality is simply the second triangle inequality. Writing the rhs of the differential equation as a function of histories, we obtain $F(\varphi)\coloneq \varphi(-\tau(\varphi))$. We can now verify that this function is almost uniformly Lipschitz-continuous:\\
For $\varphi,\psi \in V_{\alpha}$ for some $\alpha > 0$ without loss of generality $\tau(\varphi)\geq  \tau(\psi)$ and for $t>0$ we can estimate
\begin{align*}
  &\norm{\varphi(-\tau(\varphi)) - \psi(-\tau(\psi))}_{H}\\
  &\qquad= \norm[\bigg]{\varphi(-h) + \int_{-h}^{-\tau(\varphi)} \varphi'(s)\dx[s]
    - \psi(-h) - \int_{-h}^{-\tau(\psi)} \psi'(s)\dx[s]}_{H}\\
  &\qquad\leq \norm{\varphi(-h)-\psi(-h)} + \int_{-h}^{-\tau(\psi)} \norm{\varphi'(s)-\psi'(s)} \dx[s] + \int_{-\tau(\psi)}^{-\tau(\varphi)} \norm{\varphi'(s)} \dx[s]\\
  &\qquad\leq \norm{\varphi-\psi}_{\infty} + \sqrt{h}\left(\int_{-h}^0 \norm{\varphi'(s)-\psi'(s)}^{2} \dx[s]\right)^{-\nicefrac{1}{2}} + \alpha \abs{\tau(\varphi) - \tau(\psi)}\\
  &\qquad\leq \norm{\varphi - \psi}_{H^{1}(-h,0;H)}\bigl(C + \sqrt{h} + \alpha C \bigr)
\end{align*}
Appealing to \Cref{PLlocal} we obtain at least a unique local solution for given initial prehistories $\Phi$ with bounded derivative.\\
Our approach is in fact applicable for much more general cases of nonconstant delay of the form
\begin{equation*}
  x'(t) = g\bigl(t,x(t+r_{1}(x_{t})),\dots,x(t+r_{m}(x_{t}))\bigr)\,\text{,}
\end{equation*}
where the $r_{i}\colon\,H^{1}(-h,0;H)\to [-h,0]$ are assumed to be Lipschitz-continuous and $g\colon\,[0,T]\times H^{n}\to H$ is supposed to satisfy the following notion of Lipschitz-continuity: Suppose there exists $L\geq 0$ such that for all $t\in [0,T]$ and $u_{1},\dots,u_{n}$, $v_{1},\dots,v_{n}\in H$ we have
\begin{equation*}
  \norm{g(t,u_{1},\dots,u_{n})-g(t,v_{1},\dots,v_{n})}_{H}\leq L \sum_{j=1}^{n}\norm{u_{j}-v_{j}}_{H}\text{.}
\end{equation*}
Then the associated IVP for an initial prehistory $\Phi\in H^{1}(-h,0;H)$ with bounded derivative is well-posed. A proof of this statement can be found in \cite[Thm.~5.2]{Waurick2023}, where it is shown for $H= \mathbb{R}^{m}$, but the proof can be generalized effortlessly.

\subsection{Integro-differential equations}
Integro-differential equations with state- dependent delay are an important subclass and appear in many applications; for instance in models of wheel shimmy (cf. \cite{Takacs2008,Takacs2009}) and car following models (cf. \cite{Orosz2006}). They can appear in many forms, however in the spirit of the articles \cite{Khasawneh2011} and \cite{Hernandez2022}, the type of equation we are interested here is of the form:
\begin{alignat}{4}
  \label{IVPIntegroDiff}
  \begin{aligned}
    x'(t)&= F(t,x_{t})+G\Bigl(t,x_{t}, \int_0^h g(t,x_{t}(-s))k(t-s)\dx[s]\Bigr) &&\text{for}\;\;\;\;\, 0<t\leq T\\
    x(t)&= \Phi(t) &&\text{for} -h\leq t\leq 0\,\text{,}
  \end{aligned}
\end{alignat}
where
\begin{align*}
  F&\colon\,[0,T]\times H^{1}(-h,0;H) \to H\,\text{,}\\
  G&\colon\,[0,T]\times H^{1}(-h,0;H)\times H \to H\,\text{,}\\
  g&\colon\,[0,T]\times H \to H\,\text{,}\\
  k&\colon\,[-h,T]\to H\,\text{.}
\end{align*}
For the function $G$ we will assume the following Lipschitz-continuity property (G): There exists $L\geq 0$ such that for all $t\in [0,T]$ and for all $\varphi,\psi \in H^{1}(-h,0;H)$ and all $\alpha,\beta \in H$ we can estimate:
\begin{equation*}
  \norm{G(t,\varphi,\alpha)-G(t,\psi,\beta)}_{H}\leq L\left(\norm{\varphi-\psi}_{H^{1}(-h,0;H)} + \norm{\alpha - \beta}_{H}\right)
\end{equation*}
We can show:
\begin{theorem}
  For $\Phi\in H^{1}(-h,0;H)$ with bounded derivative, $F$ almost uniformly Lipschitz-continuous, $G$ satsifying property (G), $g$ Lipschitz-continuous uniformly with respect to the first variable and a kernel $k \in L^{1}(-h,T;H)$, we have a unique local solution $u \in H^{1}$ of \Cref{IVPIntegroDiff} that can be extended to some maximal $T_{0}\leq T$ such that $T_{0}=T$ or $\lim_{t\uparrow T_{0}}\norm{u'|_{(-h,t)}}_{\infty}=+\infty$.
\end{theorem}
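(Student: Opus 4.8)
The plan is to collapse the entire right-hand side of \Cref{IVPIntegroDiff} into a single history functional and then apply \Cref{PLglobal} essentially verbatim. I would define
\begin{equation*}
  \Lambda\colon [0,T]\times H^{1}(-h,0;H)\to H,\qquad
  \Lambda(t,\varphi)\coloneq F(t,\varphi)+G\Bigl(t,\varphi,\textstyle\int_{0}^{h} g(t,\varphi(-s))\,k(t-s)\dx[s]\Bigr),
\end{equation*}
so that \Cref{IVPIntegroDiff} reads $x'(t)=\Lambda(t,x_{t})$ with $x|_{[-h,0]}=\Phi$, which is an instance of \Cref{FDE}. The hypothesis $\norm{\Phi'}_{\infty}<\infty$ places $\Phi\in V_{\alpha}$ for some $\alpha>0$, so that once $\Lambda$ is shown to be continuous and almost uniformly Lipschitz-continuous, \Cref{PLglobal} delivers the unique local solution together with the stated blow-up alternative with no further work.

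First I would settle well-definedness and continuity of $\Lambda$. Reading the product $g(t,\varphi(-s))\,k(t-s)$ in the natural multiplicative structure on $H$ (so that $\norm{g(t,\varphi(-s))\,k(t-s)}_{H}\leq \norm{g(t,\varphi(-s))}_{H}\,\norm{k(t-s)}_{H}$), the inner integrand is integrable because $g(t,\argdot)$ is Lipschitz and hence of at most linear growth, $\varphi(-s)$ is bounded by \Cref{Sobolev}, and $k\in L^{1}$. Continuity of $\Lambda$ in both arguments then follows by dominated convergence together with the continuity of $F$, $G$, $g$ and of $t\mapsto\Theta_{t}u$ from \Cref{ThetaTimeCont}.

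The substance is the almost uniform Lipschitz estimate, which I would split along the two summands of $\Lambda$. The $F$-contribution is almost uniformly Lipschitz by assumption, furnishing a constant $L_{\alpha}$ on each $V_{\alpha}$. For the $G$-contribution I would invoke property (G),
\begin{equation*}
  \norm{G(t,\varphi,a)-G(t,\psi,b)}_{H}\leq L\bigl(\norm{\varphi-\psi}_{H^{1}(-h,0;H)}+\norm{a-b}_{H}\bigr),
\end{equation*}
with $a,b$ the integral terms attached to $\varphi,\psi$, reducing the task to bounding $\norm{a-b}_{H}$ by $\norm{\varphi-\psi}_{H^{1}(-h,0;H)}$. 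The decisive estimate is
\begin{equation*}
  \norm{a-b}_{H}\leq \int_{0}^{h}\norm{g(t,\varphi(-s))-g(t,\psi(-s))}_{H}\,\norm{k(t-s)}_{H}\dx[s]
  \leq L_{g}\,\norm{k}_{L^{1}(-h,T;H)}\sup_{s\in[0,h]}\norm{\varphi(-s)-\psi(-s)}_{H},
\end{equation*}
after which \Cref{Sobolev} converts the supremum into $C\norm{\varphi-\psi}_{H^{1}(-h,0;H)}$. Crucially, since $g$ is Lipschitz \emph{uniformly} in its first variable this bound is global and independent of $\alpha$; all $\alpha$-dependence enters solely through $F$. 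Adding the two contributions shows $\Lambda$ is almost uniformly Lipschitz-continuous, completing the reduction.

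I expect the only genuine care points to be bookkeeping rather than conceptual. Making the pointwise evaluation $\varphi(-s)$ inside the $s$-integral rigorous is legitimate precisely by \Cref{Sobolev}, and pulling the resulting supremum out against the $L^{1}$-kernel uniformly in $t$ is what keeps the Lipschitz constant finite. The most delicate piece is the continuity of $\Lambda$ in the time variable, where the explicit $t$-slot of $g$ and the shift $k(\argdot-s)$ vary simultaneously: continuity of translation in $L^{1}$ controls the kernel shift while dominated convergence controls the remainder.
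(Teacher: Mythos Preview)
Your proposal is correct and follows essentially the same route as the paper: both collapse the right-hand side into a single history functional, verify almost uniform Lipschitz-continuity via property~(G), the uniform Lipschitz bound on $g$, pulling the sup-norm out against $\norm{k}_{L^{1}}$, and the Sobolev embedding, then invoke \Cref{PLglobal}. Your additional discussion of well-definedness and joint continuity of $\Lambda$ (and of the multiplicative structure needed to make sense of $g(\cdot)\,k(\cdot)$) is more careful than the paper's proof, which tacitly assumes these points; they are in fact needed for \Cref{Gammaselfmapping} to apply.
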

\begin{proof}
  Appealing to \Cref{PLglobal}, we only need to verify the almost uniform Lipschitz-continuity of the rhs written as a function of $t$ and $x_{t}$. It suffices to consider the function
  \begin{equation*}
    \tilde{G}(t,\varphi)\coloneq G\Bigl(t,\varphi, \int_0^h g(t,\varphi(-s))k(t-s)\dx[s]\Bigr)\,\text{.}
  \end{equation*}
  For $\varphi,\psi\in V_{\alpha}$ for some $\alpha > 0$ we can estimate:
  \begin{align*}
    &\norm{\tilde{G}(t,\varphi)-\tilde{G}(t,\psi)}_{H}\\
    &\quad= \norm[\bigg]{G\Bigl(t,\varphi, \int_0^h g(t,\varphi(-s))k(t-s)\dx[s]\Bigr) - G\Bigl(t,\psi, \int_0^h g(t,\psi(-s))k(t-s)\dx[s]\Bigr)} \\
    &\quad\leq L_{G}\norm{\varphi-\psi}_{H^{1}(-h,0;H)} + L_{G}\norm[\bigg]{\int_0^h \bigl(g(t,\varphi(-s)) - g(t,\psi(-s))\bigr)k(t-s) \dx[s]}_{H}\\
    &\quad\leq L_{G}\norm{\varphi-\psi}_{H^{1}(-h,0;H)} + L_{G}\int_0^h \norm[\big]{g(t,\varphi(-s)) - g(t,\psi(-s))}_{H}\norm[\big]{k(t-s)}_{H} \dx[s]\\
    &\quad\leq L_{G}\norm{\varphi-\psi}_{H^{1}(-h,0;H)} + L_{G} L_{g}\int_0^h \norm{\varphi(-s) - \psi(-s)}_{H}\norm{k(t-s)}_{H} \dx[s]\\
    &\quad\leq L_{G}\norm{\varphi-\psi}_{H^{1}(-h,0;H)} + L_{G} L_{g}\norm{\varphi-\psi}_{\infty} \int_0^h \norm{k(t-s)}_{H} \dx[s]\\
    &\quad\leq L_{G}\norm{\varphi-\psi}_{H^{1}(-h,0;H)} + L_{G} L_{g} C \norm{\varphi-\psi}_{H^{1}(-h,0;H)}\norm{k}_{L^{1}(-h,T;H)}
  \end{align*}
  The last line follows from the Sobolev embedding \cref{Sobolev}.
\end{proof}

%%%%%%%%%%%%%%%%%%%%%%%%%%%%%
% Section
\section{Conclusion}
The approach to state-dependent DDEs presented in this article provides an alternative to the existing classical theory for delay differential equations. We used standard tools from the repertoire of the conventional weak approach to differential equations, all our theorems appear very traditional in style.\\
Established classical well-posedness results appeal to $\mathcal{C}$- and $\mathcal{C}^{1}$-functions, compactness of orbits, and the like. For instance Hale (cf. \cite{Hale1993}) shows that the IVP
\begin{alignat*}{4}
  \begin{aligned}
    x'(t)&=f(t,x_{t}) &\qquad\text{for}&&t>0\\
    x(t)&=\Phi(t)&\qquad\text{for}&&-h\leq t\leq 0
  \end{aligned}
\end{alignat*}
where $f\colon \mathbb{R}\times \mathcal{C}([-h,0];\mathbb{R}^{n})\supseteq D\to \mathbb{R}^{n}$ is a given continuous function and $\Phi\in  \mathcal{C}([-h,0];\mathbb{R}^{n})$ is a given initial prehistory, has a solution if $D$ is open and $(-h,\Phi)\in D$. Uniqueness is shown if in addition $f$ is Lipschitz-continuous with respect to the second argument on every compact subset of $D$ and the solution depends continuously on initial datum.\\
Nominally, Hale's classical results appear stronger, since the requirement for existence of solutions is mere continuity. We point out the important difference, that Hale's result is based upon Schauder's fixed point theorem, which does not guarantee uniqueness, contrary to the contraction mapping principle utilized in this article. The contraction mapping principle is also superior in the sense that it naturally provides continuous dependence on data and allows for easy numerics. Hale's result additionally comes with the severe restriction of being set in $\mathbb{R}^{n}$. The uniqueness assumption is fairly close to the one stated here, but the setting in our case is more approachable, as we do not require statements about compact subsets. Furthermore, on the space $V_{\alpha}$ we investigate, Hale's Lipschitz-condition implies ours.\\
Another classical well-posedness approach utilizes the concept of the solution manifold (cf. \cite{Walther2003}), which is a concept that was originally designed to study linearized stability of delay differential equations, but that can also be used to prove existence and uniqueness of solutions. In that context systems of the form
\begin{alignat*}{4}
  \begin{aligned}
    x'(t)&=f(x_{t}) &\qquad\text{for}&&t>0\\
    x(t)&=\Phi(t)&\qquad\text{for}&&-h\leq t\leq 0
  \end{aligned}
\end{alignat*}
are studied, where $f\colon\,\mathcal{C}^{1}([-h,0])\supseteq U\to \mathbb{R}^{n}$ for some open $U$ is supposed to satisfy the additional assumptions
\begin{itemize}[leftmargin=3ex]
  \item Each $Df(\phi)$ for $\phi \in U$ extends to a linear map $D_{e}f(\phi)$ on $\mathcal{C}([-h,0])$ which is continuous.
  \item For every $\phi \in U$ there exist an open neighbourhood $V\subseteq U$ and $L\geq 0$ such that for all $\psi,\chi \in V$ the following estimate holds:
        \begin{equation*}
          \abs{f(\psi)-f(\chi)}\leq L\norm{\psi-\chi}_{\mathcal{C}^{1}([-h,0])}
        \end{equation*}
\end{itemize}
In this setting, the first condition implies that the set
\begin{equation*}
  \mathcal{X}\coloneq \{\phi \in U\colon\,\phi'(0)=f(\phi)\}
\end{equation*}
is an $n$-dimensional $\mathcal{C}^{1}$-submanifold -- provided that it is nonempty in the first place. The second condition implies that the tangent space of $\mathcal{X}$ is nice and introduces a continuous semiflow for every prehistory $\phi\in\mathcal{X}$, i.e. the solutions of the problem.\\
Comparatively in this scenario our findings are a strict improvement in the context of solution theory. Since the assumptions in the solution manifold approach are strictly stronger though, a benefit of this article is that we prove that the solution manifold is non-empty, giving us access to this tool to study stability if we wish to do so. A more in-depth comparison of these two approaches in the case of a DDE from cell biology is contained in \cite{Aigner2024}.\\
A different approach to state-dependent DDEs is the use of semigroups (cf. \cite{Batkai2005} and the references therein). The semigroup approach comes with the disadvantage of having to introduce an extended state space, at least if the underlying Banach space is the space of continuous functions. Our approach makes due without. We point out again the importance of the regularity of the prehistories as discussed in the example in \Cref{academicEx} -- mere continuity does not suffice. The presented setting of $H^{1}$-prehistories leads to a simple well-posedness theory though. For comparisons to other approaches we also refer to the concluding discussion in \cite{Waurick2023}.\\
We would like to close by stressing that the key idea behind the presented approach is the use of exponentially weighted Sobolev spaces that grants access to a (boundedly) invertible time derivative. This idea is a foundation of the so-called theory of evolutionary equations that allows the inversion of operators of the form $\partial_{t}M(\partial_{t})+A$, where $M$ is a so-called material law and $A$ is a (time-independent) skew-adjoint operator, on a suitable $L_{2}$ space (Picard's theorem, cf. \cite[Thm.~6.2.1]{Waurick2022}). Utilizing this theory, it is possible to extend the ideas presented here to the theory to PDEs with state-dependent delay. This will be the topic of future research.

%%%%%%%%%%%%%%%%%%%%%%%%%%%%%%
% References
\bibliographystyle{abbrvurl}
\bibliography{references}

\end{document}